\newcommand*{\usize}{1} %
\pgfmathsetmacro{\husize}{\usize/2} %
\newcommand{\mult}[2]
{
\coordinate (bot) at (#1,#2);
\coordinate (top) at ([shift={(0,\usize)}] bot);
\coordinate (mul) at ([shift={(\usize,\husize)}] bot);
\coordinate (end) at ([shift={(\usize,0)}] mul);

\draw (bot) .. controls +(\husize,0) .. (mul);
\draw (top) .. controls +(\husize,0) .. (mul);
\draw (mul) to (end);
}
\newcommand{\unit}[2]
{
\coordinate[fill=black,regular polygon, regular polygon sides=3, rotate=90, scale=0.5] (tr) at (#1,#2);
}
\newcommand{\point}[2]
{
\coordinate[fill=black, circle, scale=0.5] (circ) at (#1,#2);
}
\newcommand{\rec}[2]
{
\coordinate[fill=green, rectangle, scale=0.5] (rec) at (#1,#2);
}
\newcommand{\stepaside}[3]
{
\coordinate (beg) at (#1,#2);
\coordinate (end) at ([shift={(\usize*2,#3*\husize)}] beg);

\draw (beg) .. controls +(right:\usize) and +(left:\usize) .. (end);
}
\newcommand{\straight}[2]
{
\draw (#1,#2) -- ++(\usize*2,0);
}
\newcommand{\waste}[2]
{

\draw[green] (#1,#2) -- ++(\usize*2,0);

}
\def\dom{{\mathrm{dom}}}
\def\cod{{\mathrm{cod}}}
\def\id{{\mathrm{id}}}
\def\can{{\mathsf{can}}}
\def\tensor{\otimes}
\def\Lim#1#2{\{#1,#2\}}
\def\Colim#1#2{{#1}\star {#2}}
\def\colim{\mathop{\mathrm{colim}}}
\def\Lan#1#2{{\mathrm{Lan}}_{#1}#2}
\def\wt{\widetilde}
\def\wh{\widehat}
\def\Two{\mathbb{2}}
\def\One{\mathbb{1}}
\def\ol#1{\overline{#1}}
\def\less{\sqsubseteq}
\renewcommand{\phi}{\varphi}
\def\const{{\mathsf{const}}}
\def\class#1{\mathcal{#1}}
\def\Q{{\class{Q}}}
\def\P{{\class{P}}}
\def\DD{{\mathbb{D}}}
\def\kat#1{{\mathscr{#1}}}
\def\C{\kat{C}}
\def\E{\kat{E}}
\def\K{\kat{K}}
\def\D{\kat{D}}
\def\S{\kat{S}}
\def\T{\kat{T}}
\def\V{\kat{V}}
\def\op{{\mathit{op}}}
\def\elts#1{{\mathsf{elts}}(#1)}
\def\Set{{\mathsf{Set}}}
\def\Pos{{\mathsf{Pos}}}
\def\Pre{{\mathsf{Pre}}}
\def\Cat{{\mathsf{Cat}}}
\def\Cocts{{\mathsf{Cocts}}}
\def\Cont{{\mathsf{Cont}}}
\def\Alg{{\mathsf{Alg}}}
\renewcommand{\to}{\longrightarrow}
\theoremstyle{plain}
\newtheorem{theorem}{Theorem}[section]
\newtheorem{proposition}[theorem]{Proposition}
\newtheorem{corollary}[theorem]{Corollary}
\newtheorem{lemma}[theorem]{Lemma}
\theoremstyle{definition}
\newtheorem{definition}[theorem]{Definition}
\newtheorem{example}[theorem]{Example}
\newtheorem{remark}[theorem]{Remark}
\numberwithin{equation}{section}
\begin{document}
\title[An elementary characterisation of sifted weights]
      {An elementary characterisation of sifted weights}
\author{Mat\v{e}j Dost\'{a}l}
\address{Department of Mathematics, Faculty of Electrical Engineering, Czech Technical University
         in Prague, Czech Republic}
\email{dostamat@math.feld.cvut.cz}
\author{Ji\v{r}\'{\i} Velebil}
\address{Department of Mathematics, Faculty of Electrical Engineering, Czech Technical University
         in Prague, Czech Republic}
\email{velebil@math.feld.cvut.cz}
\thanks{Mat\v{e}j Dost\'{a}l acknowledges the support
        by the grant No. SGS14/186/OHK3/3T/13 
        of the Grant Agency of the Czech Technical University in Prague.
        Ji\v{r}\'{\i} Velebil acknowledges the support
        by the grant  No.~P202/11/1632
        of the Czech Science Foundation.} 
\keywords{Weighted colimits, sound classes, sifted colimits}
\subjclass{}
\date{9 May 2014}

\begin{abstract}
Sifted colimits (those that commute with finite products in sets) 
play a major r\^{o}le in categorical universal algebra. 
For example, varieties of (many-sorted) algebras are precisely the 
free cocompletions under sifted colimits 
of (many-sorted) Lawvere theories. 
Such a characterisation does not depend on the existence of finite
products in algebraic theories, but on the above fact that these 
products commute with sifted colimits and another condition:
finite products form a \emph{sound} class of limits. 

In this paper we study the notion of soundness for general classes
of weights in enriched category theory. We show that soundness 
of a given class of weights is equivalent to having a `nice'
characterisation of \emph{flat} weights for that class. 
As an application, we give an elementary characterisation
of sifted weights for the enrichment in categories and in
preorders. We also provide a number of examples of sifted 
weights using our elementary criterion. 
\end{abstract}

\maketitle

\section{Introduction}
\label{sec:intro}
The classical theory of locally presentable~\cite{gu} 
and accessible categories~\cite{lair}, \cite{makkai+pare} 
(see also the more recent~\cite{ar})
has been generalised to locally $\DD$-presentable and $\DD$-accessible
categories for a `good' class $\DD$ of small categories in~\cite{ablr}.
The classical theory hinges a lot upon the interplay of two classes
of categories: the class of $\lambda$-small categories for limits 
and the class of $\lambda$-filtered categories for colimits,
where $\lambda$ is a fixed regular cardinal.
The precise nature of the interplay is that
\begin{itemize}
\item[] 
$\lambda$-small limits commute with $\lambda$-filtered 
colimits in the category of all sets and mappings.
\end{itemize}
The idea of~\cite{ablr} was to develop a more general theory
of locally presentable and accessible categories
based on the fact that one has a fixed class $\DD$ of small
categories that replaces the class of $\lambda$-small 
categories. The corresponding class of colimits, 
called {\em $\DD$-filtered\/}, is then {\em defined\/}
by the requirement that
\begin{itemize}
\item[] 
$\DD$-limits commute with $\DD$-filtered colimits in the category
of sets and mappings.
\end{itemize}
It has been showed in~\cite{ablr} that a great deal of the classical
theory can be developed for the concept of $\DD$-filteredness, 
provided that the class $\DD$ satisfies
a side condition that is called {\em soundness\/}.

Roughly speaking, sound classes $\DD$ 
allow for an easier detection of $\DD$-filteredness: for a sound
class $\DD$ a category $\E$ is $\DD$-filtered if the process 
of taking colimits over $\E$ commutes with taking $\DD$-limits 
of {\em representable functors\/}.

For example, the class $\DD$ consisiting of finite discrete
categories is sound. The corresponding $\DD$-filtered
colimits turn out to be precisely the {\em sifted\/} colimits
of~\cite{lair:sifted}. Free cocompletions of small categories
under sifted colimits generalise the notion of a variety, as shown
in~\cite{adamek+rosicky:sifted}. In fact, the notion of a sifted
colimit turned out to be a cornerstone notion in the categorical
treatment of universal algebra, see~\cite{adamek+rosicky+vitale}.

One can pass from categories to categories enriched in a suitable
monoidal $\V$ and ask whether the above results can be reproduced.
Since (co)limits over a class of categories have to be replaced
by {\em weighted\/} (co)limits for $\V$-categories, one is 
naturally forced to define and study 
{\em soundness of classes of weights\/}. Definitions of soundness
of a class of weights have appeared in~\cite{lack+rosicky}
(Axiom~A) and in Section~4 of ~\cite{day+lack:small-functors}. 
The provided definitions of soundness, when specialised 
to categories enriched in sets, are, however, {\em weaker\/}
than the definition in~\cite{ablr}.

\subsection*{Results of the paper}
For categories enriched in a general $\V$, we give in Section~\ref{sec:soundness} a definition of soundness of a class $\Psi$ of weights 
that is equivalent to the notion of~\cite{ablr}
when $\V$ is the category of sets. We show in Proposition~\ref{prop:soundness-for-saturated} that all
notions of soundness that have appeared in the literature
coincide when the class $\Psi$
satisfies two side conditions that often arise in practice.
Namely, the class $\Psi$ has to be {\em locally small\/} 
in the sense of~\cite{kelly+schmitt} and 
{\em saturated\/} in the sense of~\cite{albert+kelly}
(there called a {\em closed\/} class).

Our definition allows us to give, for a sound class $\Psi$, 
a characterisation 
of {\em $\Psi$-flat weights\/} (those weights $\phi$
such that $\phi$-colimits commute with $\Psi$-limits
in $\V$) by means of a certain coend. We show that 
this characterisation,
when $\V$ is the category of sets and $\DD$ is sound, 
boils down to the characterisation
of $\DD$-filteredness of~\cite{ablr} 
in terms of cocones for certain diagrams.

In Section~\ref{sec:elementary} we turn the coend characterisation to a useful
criterion of $\Psi$-flat weights when the sound class $\Psi$
is the class of weights for finite products. Thus
$\Psi$-flat weights are precisely the (enriched) {\em sifted\/}
weights. Specialising to the enrichment in $\Cat$,  
we can therefore deal with siftedness for 2-categories. 
We apply the coend criterion to give elementary proofs
of siftedness of various weights used in 2-dimensional
universal algebra, see, e.g.~\cite{bourke:thesis}.

\subsection*{Acknowledgement}
We thank John Bourke for the discussions
concerning various aspects of siftedness. 

\section{Preliminaries on classes of weights}
\label{sec:preliminaries}

We introduce now the basic notation and results 
on weighted limits and colimits that we will need later.
The material here is standard; for more details 
we refer to the book~\cite{kelly:book} and the 
paper~\cite{kelly+schmitt}.

We fix a complete and cocomplete symmetric monoidal category
$\V=(\V_o,\tensor,I,[-,-])$. All categories, functors and
natural transformations are to be understood as $\V$-categories,
$\V$-functors and $\V$-natural transformations.

\subsection*{Limits and colimits}
A {\em weight\/} is a functor $\phi:\E^\op\to\V$, where $\E$
is small. Given a weight $\phi:\E^\op\to\V$ and a diagram
$D:\E\to\K$, a {\em colimit\/} of $D$ weighted by $\phi$
is an object $\Colim{\phi}{D}$ together with an isomorphism
$$
\K(\Colim{\phi}{D},X)
\cong
[\E^\op,\V](\phi,\K(D-,X))
$$ 
natural in $X$. Dually, a {\em limit\/} of a diagram
$D:\E^\op\to\K$ weighted by $\phi:\E^\op\to\V$ is an object
$\Lim{\phi}{D}$ together with an isomorphism 
$$
\K(X,\Lim{\phi}{D})
\cong
[\E^\op,\V](\phi,\K(X,D-))
$$
natural in $X$.

Given a weight $\phi:\E^\op\to\V$,
a category $\K$ is called {\em $\phi$-cocomplete\/}
if it has colimits $\Colim{\phi}{D}$ of all diagrams $D:\E\to\K$.
Analogously, we define {\em $\phi$-completeness\/}
of $\K$. 

Suppose $\K$ and $\kat{L}$ have $\phi$-colimits.
A functor $H:\K\to\kat{L}$ between $\phi$-cocomplete 
categories {\em preserves\/} 
$\phi$-colimits (or, is {\em $\phi$-cocontinuous\/})
if the canonical comparison 
$\Colim{\phi}{HD}\to H(\Colim{\phi}{D})$
is an isomorphism, for all $D:\E\to\K$. 

Even more generally, given a class $\Phi$
of weights, we say that $\K$ is {\em $\Phi$-cocomplete\/}
({\em $\Phi$-complete\/}, resp.) if it has $\phi$-colimits
($\phi$-limits, resp.) for all $\phi$ in $\Phi$. Analogously,
we define {\em $\Phi$-cocontinuous\/} 
({\em $\Phi$-continuous\/}, resp.) functors.

\subsection*{Free cocompletions and saturated classes of weights}
Given a class $\Phi$ of weights, let us write
$U_\Phi:\Phi\mbox{-}\Cocts\to\Cat$ for
the forgetful 2-functor from the 2-category  
$\Phi\mbox{-}\Cocts$ of $\Phi$-cocomplete categories, 
$\Phi$-continuous functors and natural transformations.
Then $U_\Phi$ has a left adjoint pseudofunctor, yielding
a {\em free $\Phi$-cocompletion\/} $\Phi(\K)$
of any category $\K$.

For a small category $\E$, the free $\Phi$-cocompletion 
$\Phi(\E)$ can be computed via a transfinite process, namely
the closure of $\E$ in $[\E^\op,\V]$ under 
$\Phi$-colimits.\footnote{In fact, the same transfinite
process can be applied to obtain $\Phi(\K)$ for 
{\em any\/} category $\K$. There is, however, a slight 
technicality concerning size when $\K$ is not small.
Since we will not need $\Phi(\K)$ for large $\K$, we
refer to~\cite{kelly:book} for more details.}
We will need the first step $\Phi_1(\E)$ of this process:
$\Phi_1(\E)$ is the full subcategory of $[\E^\op,\V]$
spanned by $\Phi$-colimits of representables.
Hence a weight $\alpha:\E^\op\to\V$ is in 
$\Phi_1(\E)$ iff it is of the form $\Lan{T^\op}{\phi}$ for
some $T:\D\to\E$ and some weight $\phi:\D^\op\to\V$ in $\Phi$. 

The class $\Phi$ is called {\em saturated\/} 
(the concept introduced in~\cite{albert+kelly},
there called {\em closed\/}) if, for any small category $\E$, the free cocompletion $\Phi(\E)$
consists precisely of all the weights $\phi: \E^\op \to \V$
that belong to $\Phi$. If we put $\Phi^*$ to be the largest class such that
the 2-categories $\Phi^*\mbox{-}\Cocts$ and $\Phi\mbox{-}\Cocts$
coincide, then $\Phi^*$ is saturated and it is the least
saturated class containing $\Phi$.

\subsection*{Commutation of limits and colimits, flatness}
Let $\Phi$ and $\Psi$ be classes of weights.
We say that {\em $\Phi$-colimits commute with
$\Psi$-limits in $\V$\/}, if for any $\phi:\E^\op\to\V$ 
in $\Phi$, the functor
$$
\Colim{\phi}{(-)}:[\E,\V]\to\V
$$
preserves $\Psi$-limits.
We denote by $\Psi^+$ the class of {\em $\Psi$-flat\/} 
weights, i.e., all weights $\phi$ such that
$\phi$-colimits commute with 
$\Psi$-limits in $\V$.

\begin{example}
\label{ex:Psi_D}
Suppose $\V$ is cartesian closed. By $\const_1:\D^\op\to\V$
we denote the weight that is constantly the terminal object $1$.
Such weights will be called {\em conical\/}.
Any class $\DD$ of small categories induces a class 
$$
\Psi_\DD
$$ 
of conical weights $\const_1:\D^\op\to\V$ with $\D^\op$
in $\DD$.
\begin{enumerate}
\item 
Suppose $\V=\Set$. Then to say that a small 
category $\E$ is {\em $\DD$-filtered\/} in the sense of~\cite{ablr}
is to say that the conical weight $\const_1:\E^\op\to\Set$ is 
$\Psi_\DD$-flat. Indeed: (co)limits of diagrams 
weighted by conical weights yield the usual notions
defined by (co)cones. 
\item
Suppose $\V$ is arbitrary (but still cartesian closed). The class
$\Psi_\DD$ for $\DD$ consisting of all finite discrete
categories will be denoted by
$\Pi$. The corresponding class of 
$\Pi$-flat weights is called the class of {\em sifted\/} weights.
We will say more on sifted weights in Section~\ref{sec:elementary}.
\end{enumerate} 
\end{example}

\section{Sound classes of weights}
\label{sec:soundness}

In this section we generalise the definition of a 
sound class $\DD$ of small (ordinary) categories to 
soundness of a class $\Psi$ of weights for a general $\V$.

Soundness of a class $\DD$ of small categories was
defined (in case $\V=\Set$) in~\cite{ablr} using 
connectedness of a certain category of cocones.
Since cocones are not available for general $\V$,
we use a different phrasing (already implicit in~\cite{ablr}). 
We prove in Proposition~\ref{prop:sound=ablr} 
below that our definition coincides with that of~\cite{ablr}
in case $\V=\Set$. 

For any $\Psi$-flat weight $\phi$, the functor 
$\Colim{\phi}{(-)}$ is obliged to preserve 
{\em all\/} $\Psi$-limits by the definition
of $\Psi$-flatness (see Section~\ref{sec:preliminaries}).
Soundness of $\Psi$ means that we can choose 
a {\em smaller\/}
class of $\Psi$-limits to detect $\Psi$-flatness.

\begin{definition}
\label{def:sound}
A class $\Psi$ of weights is called {\em sound\/}
if a weight $\phi:\E^\op\to\V$ is $\Psi$-flat whenever
the functor
$$
\Colim{\phi}{(-)}:[\E,\V]\to\V
$$ 
preserves $\Psi$-limits of representables.
\end{definition}

\begin{example}
\label{ex:sound-classes}
\mbox{}\hfill
\begin{enumerate}
\item 
In case $\V=\Set$, the list 
$$
\mbox{$\DD$=finite categories},
\quad
\mbox{$\DD$=finite discrete categories},
\quad
\mbox{$\DD$=empty class}
$$
yields a list of sound classes $\Psi_\DD$ of 
weights by Example~2.3 of~\cite{ablr}.

By the same example, the one-element class $\DD$ 
consisting of the scheme for pullbacks,
or the two-element class $\DD$ consisting of 
the scheme for pullbacks and terminal objects, yield
classes $\Psi_\DD$ that are not sound.
\item
It has been proved in~\cite{kelly+lack:strongly-finitary}
that the class $\Pi$ of weights for finite products
is sound, for every cartesian closed $\V$.
\item
The class $\P$ of all weights is sound for any $\V$. 
Indeed, the class of all $\P$-flat weights is precisely
the class $\Q$ of all small-projective weights by Proposition~6.20
of~\cite{kelly+schmitt}. By the same proposition,
small-projectivity of $\phi:\E^\op\to\V$ can be detected
by the fact that $\Colim{\phi}{(-)}:[\E,\V]\to\V$
preserves a particular limit of representables, namely 
the limit $\Lim{\phi}{Y}$, where $Y:\E^\op\to [\E,\V]$
is the Yoneda embedding. Thus, $\P$ is sound. 
\item
The class $\Q$ of all small-projective weights is sound
for any $\V$. By Remark~8.17 of~\cite{kelly+schmitt}, 
the class of $\Q$-flat weights
coincides with the class $\P$ of all weights. Hence
the condition on soundness is vacuous.
\end{enumerate}
\end{example}

The following easy result shows that the `testing weights'
for $\Psi$-flatness can be taken in a special form:

\begin{proposition}
\label{prop:characterisation-of-soundness}
For a class $\Psi$ the following are equivalent:
\begin{enumerate}
\item 
$\Psi$ is sound.
\item
The weight $\phi:\E^\op\to\V$ is $\Psi$-flat,
whenever $\Colim{\phi}{(-)}$ preserves $\Psi_1(\E)$-limits
of representables, i.e., whenever the canonical morphism
\begin{equation}
\label{eq:can}
\can:
\Colim{\phi}{\Lim{\psi}{Y-}}
\to
\Lim{\psi}{\phi}
\end{equation}
is an isomorphism, for every $\psi:\E^\op\to\V$ in $\Psi_1(\E)$.
\end{enumerate}
\end{proposition}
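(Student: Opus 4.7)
The plan is to reduce both (1) and (2) to a single intermediate statement, namely that $\Colim{\phi}{(-)}$ preserves every $\Psi$-limit of representables in $[\E,\V]$. Once I show that this intermediate condition is equivalent to the assertion that $\can$ is invertible for every $\psi \in \Psi_1(\E)$, the equivalence (1) $\Leftrightarrow$ (2) is immediate, because both then unfold to the same statement ``this hypothesis implies $\Psi$-flatness of $\phi$''.

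The key tool is the standard Kan-extension identity for weighted limits: for any $T: \D \to \E$, any weight $\psi: \D^\op \to \V$ in $\Psi$, and any functor $H: \E^\op \to \K$,
\[
\Lim{\psi}{H \cdot T^\op} \;\cong\; \Lim{\Lan{T^\op}{\psi}}{H}.
\]
Applied with $H = Y: \E^\op \to [\E,\V]$, the Yoneda embedding of $\E^\op$, this identifies an arbitrary $\Psi$-limit of a representable diagram, namely $\Lim{\psi}{Y T^\op}$ for some $\psi \in \Psi$ and $T: \D \to \E$, with the limit $\Lim{\Lan{T^\op}{\psi}}{Y}$, where $\Lan{T^\op}{\psi}$ lies in $\Psi_1(\E)$ by the very definition of that class. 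Conversely, every element of $\Psi_1(\E)$ is of this form, so every $\Psi_1(\E)$-weighted limit of $Y$ is a disguised $\Psi$-limit of representables. Applying the same identity a second time, now with $\phi: \E^\op \to \V$ in place of $H$, and combining with co-Yoneda ($\Colim{\phi}{Y-} \cong \phi$), shows that the canonical comparison map expressing preservation of $\Lim{\psi}{Y T^\op}$ by $\Colim{\phi}{(-)}$ is naturally identified with the morphism $\can$ of~\eqref{eq:can} attached to the weight $\Lan{T^\op}{\psi} \in \Psi_1(\E)$.

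Consequently, preservation by $\Colim{\phi}{(-)}$ of every $\Psi$-limit of representables is exactly the condition that $\can$ is invertible for every $\psi \in \Psi_1(\E)$, and the proposition follows by reading Definition~\ref{def:sound} through this equivalence. The only step requiring genuine care is verifying that the two canonical comparisons really do correspond under the Kan-extension identity; I expect this to be a routine unwinding of universal properties rather than a substantive obstacle.
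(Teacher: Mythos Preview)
Your proposal is correct and follows essentially the same route as the paper: both arguments use the Kan-extension identity $\Lim{\psi}{H\cdot T^\op}\cong\Lim{\Lan{T^\op}{\psi}}{H}$ (applied once with $H=Y$ and once with $H=\phi$) together with the observation that $\Psi_1(\E)$ consists precisely of the weights $\Lan{T^\op}{\psi}$, thereby identifying the preservation condition of Definition~\ref{def:sound} with invertibility of $\can$ for all $\psi\in\Psi_1(\E)$. Your framing via a common intermediate condition and the explicit mention of co-Yoneda make the logical structure slightly more transparent, but the mathematical content is the same.
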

\begin{proof}
Let $Y:\E^\op\to [\E,\V]$ be the Yoneda embedding.
Definition~\ref{def:sound} requires the canonical
morphism 
$$
\Colim{\phi}{\Lim{\psi}{YT^\op-}}
\to
\Lim{\psi}{\phi\cdot T^\op}
$$
to be an isomorphism, for every $\psi:\D^\op\to\V$ in $\Psi$
and every $T:\D\to\E$. 

The weight $\Lan{T^\op}{\psi}:\E^\op\to\V$ is in $\Psi_1(\E)$
and every weight in $\Psi_1(\E)$ has this form, for some
$\psi:\D^\op\to\V$ in $\Psi$ and some $T:\D\to\E$.

Since there are isomorphisms
$$
\Lim{\psi}{YT^\op}
\cong
\Lim{\Lan{T^\op}{\psi}}{Y},
\quad
\Lim{\psi}{\phi\cdot T^\op}
\cong
\Lim{\Lan{T^\op}{\psi}}{\phi}
$$
the equivalence of (1) and (2) follows.
\end{proof}

The canonical morphism in~\eqref{eq:can} can be rewritten
using coends and Yoneda Lemma as the morphism
\begin{equation}
\label{eq:can-coend}
\can:
\int^e
[\E^\op,\V](Ye,\phi)\tensor [\E^\op,\V](\psi,Ye)
\to
[\E^\op,\V](\psi,\phi)
\end{equation}
that is given by composition in $[\E^\op,\V]$.
We illustrate now on two well-known classes that this 
coend description yields precisely
the `classical' description of flatness by means of 
the category of cocones.

\begin{example}[\bf Sifted weights and flat weights for $\V=\Set$]
\label{ex:sifted+filtered}
Suppose $\V=\Set$.
Recall that every $\phi:\E^\op\to\Set$ has a 
{\em category of elements\/} $\elts{\phi}$: the objects
are pairs $(x,e)$ with $x\in\phi e$ and a morphism
from $(x,e)$ to $(x',e')$ is a morphism $t:e\to e'$ in $\E$
such that $\phi t(e')=e$ holds.
\begin{enumerate}
\item
Let $\Pi$ be the sound class of weights for finite products. 
The category $\Pi_1(\E)$ is spanned by finite 
coproducts of representables in $[\E^\op,\Set]$. Hence
a general testing weight $\psi:\E^\op\to\Set$ for 
$\Pi$-flatness by Proposition~\ref{prop:characterisation-of-soundness} 
has the form $\coprod_{i\in I} Ye_i$ where $I$ is a finite set.

We show now that~\eqref{eq:can-coend}
yields the well-known characterisation of {\em sifted\/}
weights, see~\cite{lair:sifted}.

Indeed, given a general weight $\phi:\E^\op\to\Set$, the mapping
$\can$ has the form
$$
\can:
\int^e
\phi e \times \prod_{i\in I} \E(e_i,e)
\to
\prod_{i\in I} \phi e_i,
\quad
[(x,(t_i))]
\mapsto
(\phi t_i (x))
$$
Hence $\can$ is a bijection if and only if the following
two conditions hold:
\begin{enumerate}
\item 
The mapping $\can$ is surjective, i.e., for every 
element of $\prod_{i\in I} \phi e_i$, i.e., for every
$I$-tuple $(x_i)$ of elements of $\phi$ there is an $e$, an
element $x\in \phi e$ and an $I$-tuple $t_i:e_i\to e$
of morphisms in $\E$ such that $\phi t_i (x) = x_i$.

Briefly: on every $I$-tuple of objects of $\elts{\phi}$
there is a cocone.
\item
The mapping $\can$ is injective, i.e., for any pair
$(x,(t_i))$, $(x',(t'_i))$ such that $\phi t_i (x)=\phi t'_i (x')$
holds for all $i$, i.e., for any two cocones of the
same $I$-tuple of objects of $\elts{\phi}$, there
is a zig-zag in $\E$ that connects these cocones 
in $\elts{\phi}$.
\end{enumerate}
To summarise: a weight $\phi:\E^\op\to\Set$
is $\Pi$-flat iff its category of elements is sifted
(every finite family of elements has a cocone and every
two cocones for the same finite family are connected by
a zig-zag).
\item
Let $\Psi$ be the sound class of finite
(conical) limits, i.e., let $\Psi=\Psi_\DD$
for the class $\DD$ of finite categories. 

The category $\Psi_1(\E)$ is spanned by
finite colimits of representable functors 
in $[\E^\op,\Set]$.
Thus, a general testing weight $\psi:\E^\op\to\Set$
for $\Psi$-flatness has the form $\psi=\colim YC$ for a 
diagram $C:\C\to\E$ with $\C$ finite.

Given a general weight $\phi:\E^\op\to\Set$, the mapping
$\can$ is a bijection iff two conditions hold:
\begin{enumerate}
\item
The mapping $\can$ is surjective, i.e., every finite diagram
in $\elts{\phi}$ has a cocone.
\item
The mapping $\can$ is injective, i.e., any two cocones
for the same finite diagram in $\elts{\phi}$ are connected
by a zig-zag in $\elts{\phi}$.
\end{enumerate}
The above two conditions together state that the category of cocones
of finite diagrams in $\elts{\phi}$ is nonempty and connected.
This means that the category $\elts{\phi}$ is filtered. As expected,
$\Psi$-flat weights are precisely the flat ones.
\end{enumerate}
In both cases above, the classes of testing weights can be
simplified. For example, for siftedness, one can
choose only nullary coproduct of representables and binary
coproducts of representables as the testing weights. 
We will use this fact in Section~\ref{sec:elementary} below. 
\end{example}

We prove now that Definition~\ref{def:sound} coincides
with the definition of soundness from~\cite{ablr}
(this definition is condition~(2) of the proposition).

\begin{proposition}
\label{prop:sound=ablr}
Suppose $\V=\Set$. For a class $\DD$ of small categories,
the following conditions are equivalent:
\begin{enumerate}
\item 
The class $\Psi_\DD$ of conical weights $\const_1:\D^\op\to\Set$
with $\D^\op$ in $\DD$ is sound.
\item
A category $\E$ is $\DD$-filtered whenever the category 
of cocones for any functor $T:\D\to\E$ with $\D^\op$ in $\DD$
is nonempty and connected.
\end{enumerate}
\end{proposition}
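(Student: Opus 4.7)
The plan is to combine Proposition~\ref{prop:characterisation-of-soundness} with the standard fact that, in $\V=\Set$, every weight $\phi:\E^\op\to\Set$ is a conical colimit of representables over its category of elements $\elts{\phi}$, so that $\phi$-weighted colimits in $\Set$ coincide with conical colimits along the projection $\proj:\elts{\phi}\to\E$. In particular $\phi$ is $\Psi_\DD$-flat iff $\elts{\phi}$ is $\DD$-filtered, and for $\phi=\const_1$ one has $\elts{\phi}=\E$. With this dictionary in place, the two notions of soundness will match up by specialisation and by application to categories of elements.

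The first concrete step is to unpack the canonical map~\eqref{eq:can-coend} for a testing weight $\psi=\Lan{T^\op}{\const_1}=\colim YT$ in $(\Psi_\DD)_1(\E)$, with $T:\D\to\E$ and $\D^\op\in\DD$. Using $[\E^\op,\Set](Ye,\phi)\cong\phi e$ and $[\E^\op,\Set](\colim YT,-)\cong\lim_d[\E^\op,\Set](YTd,-)$, the canonical map takes the form
\[
\can:\int^{e\in\E}\phi e\times\lim_d\E(Td,e)\;\to\;\lim_d\phi(Td),\qquad[(e,x,(f_d))]\mapsto(\phi f_d(x))_d.
\]
An element of the codomain is precisely a lift $\tilde T:\D\to\elts{\phi}$ of $T$; an element of the domain is a connected component of the category whose objects are triples $(e,x,(f_d))$ with $(f_d:Td\to e)_d$ a cocone on $T$ and $x\in\phi e$, a quick unwinding identifying these with cocones on the lift $\tilde T$ inside $\elts{\phi}$. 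Thus $\can$ is a bijection for every such $\psi$ iff, for every functor $\tilde T:\D\to\elts{\phi}$ with $\D^\op\in\DD$, the category of cocones on $\tilde T$ in $\elts{\phi}$ is nonempty and connected.

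For $(1)\Rightarrow(2)$ I would specialise the above analysis to $\phi=\const_1$, in which case $\elts{\phi}=\E$ and the cocone hypothesis of (2) is exactly the canonical-map hypothesis of Proposition~\ref{prop:characterisation-of-soundness}. Soundness then yields $\Psi_\DD$-flatness of $\const_1$, which by Example~\ref{ex:Psi_D}(1) is $\DD$-filteredness of $\E$. For $(2)\Rightarrow(1)$ I would take an arbitrary $\phi:\E^\op\to\Set$ for which $\can$ is iso on all of $(\Psi_\DD)_1(\E)$; the equivalence from the previous paragraph then gives the cocone condition for every functor $\tilde T:\D\to\elts{\phi}$ with $\D^\op\in\DD$ (every such $\tilde T$ is trivially a lift of $\proj\circ\tilde T$, so no diagrams are missed). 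Applying (2) to $\elts{\phi}$ produces $\DD$-filteredness of $\elts{\phi}$, hence $\Psi_\DD$-flatness of $\phi$ by the reduction to the category of elements.

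The main step to get right is the bookkeeping that matches the connected components of the coend on the left of $\can$ with the components of the cocone category inside $\elts{\phi}$; once that dictionary is verified, both implications follow cleanly from the classical reduction of $\phi$-weighted colimits in $\Set$ to conical colimits over $\elts{\phi}$.
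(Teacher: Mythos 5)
Your proof is correct and follows essentially the same route as the paper: both hinge on unpacking the canonical map~\eqref{eq:can-coend} for testing weights of the form $\Lan{T^\op}{\const_1}$ and identifying the coend with the set of connected components of categories of cocones. The only difference is one of completeness in your favour: you carry out the computation for a general weight $\phi$ and make explicit the reduction of $(2)\Rightarrow(1)$ to conical weights via lifts of $T$ to $\elts{\phi}$, a step the paper compresses into ``the equivalence of (1) and (2) follows immediately'' after treating only the case $\phi=\const_1$.
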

\begin{proof}
We will use the canonical morphism~\eqref{eq:can-coend}.
Observe first that 
$[\E^\op,\Set](\psi,\const_1)$ is a one-element set
for any small category $\E$ and any $\psi:\E^\op\to\Set$, 
since $\const_1$ is a terminal object in $[\E^\op,\Set]$.

By Proposition~\ref{prop:characterisation-of-soundness} 
any testing weight $\psi:\E^\op\to\Set$ 
for $\Psi_\DD$-flatness of $\const_1:\E^\op\to\Set$
has the form $\Lan{T^\op}{\const_1}$ for some
$T:\D\to\E$, where $\const_1:\D^\op\to\Set$
is in $\Psi_\DD$. The left-hand side of~\eqref{eq:can-coend}
therefore has the form
$$
\int^e 
[\E^\op,\Set](\Lan{T^\op}{\const_1},Ye)
\cong
\int^e
[\D^\op,\Set](\const_1,Ye\cdot T^\op)
\cong
\int^e
[\D^\op,\Set](\const_1,\E(T-,e))
$$
Observe that the category of elements of 
$[\D^\op,\Set](\const_1,\E(T-,e))$ is precisely the category
of cocones for $T$ that have $e$ as a vertex.

Thus~\eqref{eq:can-coend} is a bijection iff 
$$
\int^e
[\D^\op,\Set](\const_1,\E(T-,e))
\cong
1
$$
holds. From this, the equivalence of~(1) and~(2) follows
immediately.
\end{proof}

There is another important issue related to sound classes
of weights. Adapting freely the terminology 
of~\cite{adamek+rosicky+vitale}, we may call a small
$\Psi$-complete category $\T$ a {\em $\Psi$-theory\/}.
The category $\Psi\mbox{-}\Alg(\T)$ of 
{\em $\Psi$-algebras for $\T$\/}
is the full subcategory of $[\T,\V]$ spanned by functors
that preserve $\Psi$-limits.

By definition, $\Psi$-flat colimits
commute with $\Psi$-limits. Hence
the category $\Psi\mbox{-}\Alg(\T)$ is closed in $[\T,\V]$ under
$\Psi$-flat colimits and it contains the representables, 
for any $\Psi$-theory $\T$.
Therefore, for any class $\Psi$ and any $\Psi$-theory $\T$, 
there is an inclusion 
$$
\Psi^+(\T^\op)\subseteq\Psi\mbox{-}\Alg(\T)
$$
since $\Psi^+(\T^\op)$ is the closure in $[\T,\V]$
of the representables under $\Psi$-flat colimits.
We discuss now the case when the above inclusion is an equality.

\begin{lemma}
\label{lem:sound=>nice-algebras}
Suppose the class $\Psi$ is sound. Then 
$\Psi^+(\T^\op)=\Psi\mbox{-}\Alg(\T)$ holds
for any $\Psi$-theory $\T$. 
\end{lemma}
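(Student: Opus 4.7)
The inclusion $\Psi^+(\T^\op) \subseteq \Psi\mbox{-}\Alg(\T)$ has already been recorded just above, so the task is the reverse inclusion: given any $F \in \Psi\mbox{-}\Alg(\T)$, exhibit $F$ as a $\Psi$-flat colimit of representables in $[\T,\V]$. Such a presentation is supplied tautologically by density: writing $Z:\T^\op \to [\T,\V]$ for the embedding $s \mapsto \T(s,-)$, the density formula $F(t) \cong \int^s F(s) \tensor \T(s,t)$ yields $F \cong \Colim{F}{Z}$. It therefore suffices to prove that the weight $F : \T \to \V$ (regarded as a weight on $\T^\op$) is itself $\Psi$-flat; this will place $F$ inside the closure of the representables in $[\T,\V]$ under $\Psi^+$-colimits, which is exactly $\Psi^+(\T^\op)$.

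To verify $\Psi$-flatness of $F$ I would invoke Proposition~\ref{prop:characterisation-of-soundness}: since $\Psi$ is sound, it suffices to check that the canonical morphism
$$
\can : \Colim{F}{\Lim{\psi}{Y-}} \to \Lim{\psi}{F}
$$
is invertible for every $\psi \in \Psi_1(\T^\op)$, where $Y : \T \to [\T^\op,\V]$ is the Yoneda embedding. By definition of $\Psi_1(\T^\op)$, any such $\psi$ has the form $\Lan{S^\op}{\psi'}$ for some $\psi' \in \Psi$ with domain $\D^\op$ and some $S : \D \to \T^\op$, so that both sides of $\can$ may be rewritten in terms of $\psi'$ via the identity $\Lim{\Lan{S^\op}{\psi'}}{-} \cong \Lim{\psi'}{(-) \cdot S^\op}$ already used in the proof of Proposition~\ref{prop:characterisation-of-soundness}.

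The left-hand side simplifies as follows: since $\T$ is $\Psi$-complete, the limit $L := \Lim{\psi'}{S^\op}$ exists in $\T$, and because the Yoneda embedding preserves all existing limits, $\Lim{\psi}{Y} \cong \Lim{\psi'}{Y \cdot S^\op} \cong Y(L) = \T(-,L)$; a second appeal to density gives $\Colim{F}{\T(-,L)} \cong F(L)$. For the right-hand side, $\Lim{\psi}{F} \cong \Lim{\psi'}{F \cdot S^\op}$, and since $F$ is a $\Psi$-algebra it preserves $\Psi$-limits, so this equals $F(\Lim{\psi'}{S^\op}) = F(L)$. A routine naturality check identifies $\can$ with the resulting isomorphism $F(L) \cong F(L)$, which completes the argument. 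The main obstacle is simply book-keeping of variances --- making sure $F$, the Yoneda embedding $Y$, and the testing weights $\psi \in \Psi_1(\T^\op)$ all live in compatible functor categories, so that the characterisation of Proposition~\ref{prop:characterisation-of-soundness} applies to $F$ seen as a weight on $\T^\op$; once this is in place, the whole calculation reduces to two applications of density and one application of $\Psi$-continuity of $F$.
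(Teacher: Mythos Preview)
Your argument is correct and is essentially the paper's proof, only spelled out in full detail: the paper compresses the entire computation into the single line ``$\phi$ preserves $\Psi$-limits, hence $\Colim{\phi}{(-)}$ preserves $\Psi$-limits of representables'', which unpacks to exactly your calculation --- the $\Psi$-limit of representables is $Y(L)$ because $\T$ is $\Psi$-complete and $Y$ preserves limits, $\Colim{F}{Y(L)} \cong F(L)$ by Yoneda, and $F(L) \cong \Lim{\psi'}{F\cdot S^\op}$ because $F$ preserves $\Psi$-limits. One minor simplification: since the class $\Psi^+$ is saturated, $\Psi^+(\T^\op)$ consists \emph{precisely} of the $\Psi$-flat weights $\T \to \V$, so once you know $F$ is $\Psi$-flat you are done --- the density presentation $F \cong \Colim{F}{Y}$ you invoke is not really needed as a separate step.
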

\begin{proof}
Suppose $\phi:\T\to\V$ preserves $\Psi$-limits.
Then $\Colim{\phi}{(-)}:[\T^\op,\V]\to\V$ preserves
$\Psi$-limits of representables. By soundness of $\Psi$,
this means that $\phi$ is $\Psi$-flat. Hence
$\phi$ is in $\Psi^+(\T^\op)$. 
\end{proof}

\begin{remark}
The equality $\Psi^+(\T^\op)=\Psi\mbox{-}\Alg(\T)$
for any $\Psi$-theory $\T$ is an important fact
that allows for the development of an abstract theory
of `algebras' for $\Psi$. 

For example, when $\V=\Set$ and $\Psi=\Pi$, 
much of the theory of varieties of algebras hinges upon 
the above equality, see~\cite{adamek+rosicky+vitale}.
Namely, a $\Pi$-theory $\T$ is then precisely a (many-sorted) 
Lawvere theory, and a functor $\phi:\T\to\Set$ is an
algebra for $\T$ iff $\phi$ is a sifted weight. 

Another instance of the above is 
the equality, for any category $\T$ with finite limits,
of the free cocompletion ${\mathsf{Ind}}(\T^\op)$ 
of $\T^\op$ under filtered colimits and the category
${\mathsf{Lex}}(\T,\Set)$ of functors preserving finite
limits. This coincidence is vital in interpreting locally
finitely presentable categories as categories of algebras
for essentially algebraic theories, see~\cite{ar}.
\end{remark}

In Remark~2.6 of~\cite{ablr}, the authors present
the class $\Psi_\DD$ for $\DD$=(pullbacks+terminal object)
as the example of a class that is {\em not sound\/},
yet the equality $\Psi_\DD^+(\T^\op)=\Psi_\DD\mbox{-}\Alg(\T)$ 
holds. We show now that such a counterexample is essentially
due to the fact that $\Psi_\DD$ is not saturated.

In what follows we require a free 
$\Psi$-theory to exist on every small category.
More precisely, we require the class $\Psi$ to be 
{\em locally small\/} (see~\cite{kelly+schmitt}):
the category $\Psi(\D)$ is small for every small $\D$.

\begin{proposition}
\label{prop:soundness-for-saturated}
Suppose the class $\Psi$ is locally small and saturated.
Then the following are equivalent:
\begin{enumerate}
\item 
$\Psi$ is sound.
\item
$\Psi^+(\T^\op)=\Psi\mbox{-}\Alg(\T)$ holds
for any $\Psi$-theory $\T$.
\end{enumerate}
\end{proposition}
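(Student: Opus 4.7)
The direction $(1)\Rightarrow(2)$ is Lemma~\ref{lem:sound=>nice-algebras}, where neither saturation nor local smallness plays any role; so only $(2)\Rightarrow(1)$ needs a plan. I would begin by invoking Proposition~\ref{prop:characterisation-of-soundness}, which reduces the task to showing that any $\phi:\E^\op\to\V$ for which the canonical morphism $\can$ in~\eqref{eq:can-coend} is an isomorphism for every $\psi\in\Psi_1(\E)$ is already $\Psi$-flat. A first observation is that saturation forces $\Psi_1(\E)=\Psi(\E)$: indeed any $\psi\in\Psi(\E)$ equals $\Lan{\id_\E^\op}{\psi}$ with $\psi\in\Psi$ by saturation, exhibiting $\psi$ as a $\Psi$-colimit of representables. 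Hence the hypothesis on $\phi$ becomes the coend identity
\[
[\E^\op,\V](s,\phi)\;\cong\;\int^{e}\phi(e)\tensor[\E^\op,\V](s,Ye)\qquad(s\in\Psi(\E)).
\]

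The key construction is the $\Psi$-theory $\T:=\Psi(\E)^\op$, which is small by local smallness of $\Psi$ and $\Psi$-complete because $\Psi(\E)$ is $\Psi$-cocomplete. Define $\wt\phi:\T\to\V$ by $\wt\phi(s)=[\E^\op,\V](s,\phi)$. The contravariant continuity of $[\E^\op,\V](-,\phi)$, combined with the fact that $\Psi$-colimits in $\Psi(\E)$ are computed as in $[\E^\op,\V]$, shows that $\wt\phi$ preserves $\Psi$-limits, so $\wt\phi\in\Psi\mbox{-}\Alg(\T)$. Assumption~$(2)$ then yields $\wt\phi\in\Psi^+(\T^\op)$. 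A routine Fubini computation on coends shows that the class $\Psi^+$ is itself saturated ($\Psi^+$-colimits of $\Psi^+$-weights remain $\Psi$-flat); consequently $\Psi^+(\T^\op)$ coincides with the class of $\Psi$-flat weights on $\T^\op=\Psi(\E)$, so $\wt\phi$ is in fact $\Psi$-flat as a weight.

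To descend $\Psi$-flatness to $\phi$, take $\psi':\D^\op\to\V$ in $\Psi$ and $M:\D^\op\to[\E,\V]$, and set $\wt M(d):=\Lan{Y}{M(d)}:\Psi(\E)\to\V$, so $\wt M(d)(s)=\Colim{s}{M(d)}$. Substituting the displayed coend identity above into $\int^{s}\wt\phi(s)\tensor(-)$, swapping the two coends (always legitimate), and then applying the coYoneda formula $\int^{s}[\E^\op,\V](s,Ye')\tensor X(s)=X(Ye')$ (valid for any covariant $X:\Psi(\E)\to\V$) together with the obvious $\wt{F}(Ye')=F(e')$, one obtains
\[
\Colim{\wt\phi}{\wt M(d)}\cong\Colim{\phi}{M(d)}
\qquad\text{and}\qquad
\Colim{\wt\phi}{\Lim{\psi'}{\wt M}}\cong\Colim{\phi}{\Lim{\psi'}{M}}.
\]
The first identity is natural in $d$. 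Combined with $\Psi$-flatness of $\wt\phi$ this assembles into the chain
\[
\Colim{\phi}{\Lim{\psi'}{M}}\;\cong\;\Colim{\wt\phi}{\Lim{\psi'}{\wt M}}\;\cong\;\Lim{\psi'}{\Colim{\wt\phi}{\wt M-}}\;\cong\;\Lim{\psi'}{\Colim{\phi}{M-}},
\]
establishing $\phi\in\Psi^+$.

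The main obstacle is precisely the bridging calculation in the previous paragraph: the hypothesis on $\phi$ enters there to rewrite $\wt\phi(s)=[\E^\op,\V](s,\phi)$ as the coend tensor of $\phi$ with the hom $[\E^\op,\V](s,Y-)$, and the coYoneda lemma inside the category $\Psi(\E)$ then collapses the resulting integrand to its value at representables, converting $\wt\phi$-weighted colimits into $\phi$-weighted colimits of the original (un-tilded) diagrams. Without saturation of $\Psi$ one cannot identify $\Psi_1(\E)$ with $\Psi(\E)$; without the (automatic) saturation of $\Psi^+$ one cannot identify $\Psi^+(\T^\op)$ with the class of $\Psi$-flat weights on $\T$. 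Both of these identifications are essential to the descent.
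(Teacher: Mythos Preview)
Your argument is correct and follows the same overall strategy as the paper: build a $\Psi$-theory out of $\E$, extend $\phi$ to an algebra on that theory, invoke~(2), and then descend. The execution, however, is genuinely different. The paper takes the closure $\C^\op$ of $\E^\op$ under $\Psi$-limits inside $[\E,\V]$ and sets $\ol\phi=\Colim{\phi}{(-)}\circ W$; the hypothesis on $\phi$ (together with saturation of $\Psi$) is what makes $\ol\phi$ a $\Psi$-algebra, and the descent is accomplished abstractly via the density of $W$ and the adjunction $\Colim{(-)}{W}\dashv\wt W$, so that $F\cong\Colim{\ol\phi}{(-)}\circ\wt W$ is a composite of two $\Psi$-continuous functors. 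By contrast, you work on the presheaf side with $\T=\Psi(\E)^\op$ and $\wt\phi=[\E^\op,\V](-,\phi)$; here $\wt\phi$ is a $\Psi$-algebra \emph{for free}, and the hypothesis on $\phi$ is instead spent in the descent step, where it says precisely that $\wt\phi\cong\Lan_{Z}\phi$, so that $\Colim{\wt\phi}{N}\cong\Colim{\phi}{(N\!\upharpoonright\!\E)}$ for every $N:\Psi(\E)\to\V$. Your route trades the density argument for an explicit Fubini/coYoneda calculation; the paper's route trades the coend calculus for a clean factorisation through a reflective subcategory. Both quietly use that $\Psi^+$ is saturated (this is how membership in $\Psi^+(\T^\op)$ becomes genuine $\Psi$-flatness), and both use local smallness only to guarantee that the constructed theory is small.
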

\begin{proof}
It suffices to prove that (2) implies (1).
Consider any weight $\phi:\E^\op\to\V$
such that the functor 
$$
F\equiv\Colim{\phi}{(-)}:[\E,\V]\to\V
$$
preserves $\Psi$-limits of representables.
We prove that $F$ preserves all $\Psi$-limits.

Denote by 
$$
\xymatrix{
\E^\op
\ar[0,1]_-{Z^\op}
&
\C^\op
\ar[0,1]_-{W}
&
[\E,\V]
\ar@{<-} `u[ll] `[ll]_-{Y_{\E^\op}} [ll]
}
$$  
the factorisation of the Yoneda embedding where
$Z^\op:\E^\op\to\C^\op$ is the closure of $\E^\op$
in $[\E,\V]$ under $\Psi$-limits. Notice that $\C^\op$
is small since $\Psi$ is locally small. 

By the construction, $\C^\op$ is a $\Psi$-theory and the functor
$W:\C^\op\to [\E,\V]$ preserves $\Psi$-limits. Furhtermore,
every object of $\C^\op$ is a $\Psi$-limit of representables,
since $\Psi$ is saturated. Hence the composite
$$
\xymatrix{
\ol{\phi}
\equiv
\C^\op
\ar[0,1]^-{W}
&
[\E,\V]
\ar[0,1]^-{F}
&
\V
}
$$
preserves $\Psi$-limits. By our assumption~(2),
the equality $\Psi^+(\C)=\Psi\mbox{-}\Alg(\C^\op)$
holds. Hence the functor $\Colim{\ol{\phi}}{(-)}:[\C,\V]\to\V$ 
preserves $\Psi$-limits.

Moreover,
$$
\Colim{\ol{\phi}}{(-)}\cong \Colim{(-)}{FW}
$$
holds, since $\Colim{\ol{\phi}}{(-)}\cong\Lan{Y_{\C^\op}}{\ol{\phi}}$,
and $\Lan{Y_{\C^\op}}{W}\cong\Colim{(-)}{W}$, and $F$ preserves
$\Lan{Y_{\C^\op}}{W}$ since $F=\Colim{\phi}{(-)}$ 
preserves colimits. See the diagram
$$
\xymatrix{
\C^\op
\ar[0,2]^-{Y_\C^\op}
\ar[1,2]_{W}
&
&
[\C,\V]
\ar[1,0]_{\Colim{(-)}{W}}
\ar `r[dd] `[dd]^-{\Colim{\ol{\phi}}{(-)}} [dd]
\\
&
&
[\E,\V]
\ar[1,0]_{F}
\\
&
&
\V
}
$$
In the adjunction $\Colim{(-)}{W}\dashv\wt{W}:[\E,\V]\to [\C,\V]$
the functor $\wt{W}$ is fully faithful, since $W$ is dense.
Thus, the composite
$$
\xymatrix{
[\E,\V]
\ar[0,1]^-{\wt{W}}
&
[\C,\V]
\ar[0,1]^-{\Colim{(-)}{W}}
&
[\E,\V]
\ar[0,1]^-{F}
&
\V
}
$$
is isomorphic to $F=\Colim{\phi}{(-)}$ and it preserves all 
$\Psi$-limits. We proved that $\phi$ is $\Psi$-flat;
the proof is finished.
\end{proof}

\begin{example}
Local smallness of a class $\Psi$ is a nontrivial property.
\begin{enumerate}
\item
Every small class is locally small. 
Example~15 of~\cite{velebil+adamek:conservative}
may be modified to show that, when $\V=\Set$, 
there exists a locally small class $\Psi$ 	
such that $\Psi$ is not contained in the saturation 
of any small class of weights.
\item
Every subclass of a locally small class is locally
small again. The saturation of a locally small class
is locally small again.
\item 
The class $\P$ of all weights is typically
not locally small. For example, 
$\P(\D)=[\D^\op,\Set]$ in case $\V=\Set$.
In fact, $\P$ is locally small iff $\V$
is small, i.e., iff $\V$ is a commutative quantale.
If $\V$ is a commutative quantale, then every class of weights
is locally small. 
\item
Even classes substantially smaller than $\P$ may not be
locally small. For example, consider the class $\Q$ of
small-projective weights that yields the Cauchy completion 
$\Q(\D)$ of the category $\D$, see~\cite{kelly+schmitt}.
\begin{enumerate}
\item
The class $\Q$ is locally small, whenever 
$\V$ is locally presentable as a monoidal category by 
Theorem~6 of~\cite{johnson:small-cauchy}.
\item
However, in case $\V$ is the monoidal closed category 
$\mathsf{Sup}$ of complete join-semilattices and 
join-preserving maps, the class $\Q$ 
is not locally small by Section~1 of~\cite{johnson:small-cauchy}. 
In this enrichment, the Cauchy completion ${\mathcal Q}(\D)$ 
of a small category $\D$ always contains 
all small coproducts of representables. 
\end{enumerate}
\end{enumerate}
\end{example}

\begin{remark}
We do not know whether local smallness of $\Psi$
can be omitted from the assumptions of 
Proposition~\ref{prop:soundness-for-saturated}.
\end{remark}

We can combine the above with Theorem~8.11 of~\cite{kelly+schmitt}
to obtain further characterisation of soundness
of locally small saturated classes.

\begin{corollary}
For a locally small saturated class $\Psi$, the following
conditions are equivalent:
\begin{enumerate}
\item 
$\Psi$ is sound.
\item
$\Psi^+(\T^\op)=\Psi\mbox{-}\Alg(\T)$ holds
for any $\Psi$-theory $\T$.
\item
For any small $\D$, every weight 
$\phi:\D^\op\to\V$ is a $\Psi$-flat colimit of
a diagram in $\Psi(\D)$.
\item
For any small $\D$, the closure of $\Psi(\D)$
in $[\D^\op,\V]$ under $\Psi$-flat colimits
is all of $[\D^\op,\V]$.
\end{enumerate}
\end{corollary}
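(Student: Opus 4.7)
The equivalence $(1) \Leftrightarrow (2)$ is Proposition~\ref{prop:soundness-for-saturated}. The plan is to extend the chain of equivalences via $(2) \Leftrightarrow (4)$ and $(3) \Leftrightarrow (4)$, with Theorem~8.11 of~\cite{kelly+schmitt} serving as the key additional input.

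For $(2) \Leftrightarrow (4)$, I would begin by observing that, for any small $\D$, the category $\Psi(\D)^\op$ is a small $\Psi$-theory: it is small by local smallness of $\Psi$, and $\Psi$-complete because $\Psi(\D)$ is $\Psi$-cocomplete. The universal property of $\Psi(\D)$ as free $\Psi$-cocompletion then yields a natural equivalence
$$
\Psi\mbox{-}\Alg(\Psi(\D)^\op) \simeq [\D^\op,\V]
$$
that identifies the representables on the left with the canonical copy of $\Psi(\D)$ inside $[\D^\op,\V]$, and that matches $\Psi$-flat colimits on both sides. Condition (2) applied to $\T = \Psi(\D)^\op$ thus translates verbatim into (4). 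The converse direction, deriving (2) for an arbitrary $\Psi$-theory $\T$ from (4), I would handle by passing from $\T$ to $\Psi(\T^\op)^\op$ and using Theorem~8.11 of~\cite{kelly+schmitt} to recognise $\Psi\mbox{-}\Alg(\T)$ inside $[\T,\V]$ as exactly the $\Psi^+$-cocompletion of the representables of $\T^\op$.

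For $(3) \Leftrightarrow (4)$, the implication $(3) \Rightarrow (4)$ is immediate. The converse is an application of the saturation of $\Psi^+$, which is provided by Theorem~8.11 of~\cite{kelly+schmitt}: saturation collapses the iterated $\Psi^+$-closure of $\Psi(\D)$ into the class of weights expressible as a single $\Psi^+$-colimit of a diagram in $\Psi(\D)$.

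The hardest step will be making the direction $(4) \Rightarrow (2)$ precise for a general $\Psi$-theory, rather than a free one of the form $\Psi(\D)^\op$. Although (4) applied to $\D = \T^\op$ expresses every functor $\T \to \V$ as a $\Psi$-flat colimit of a diagram in $\Psi(\T^\op)$, one must still refine this decomposition, for $\phi$ preserving $\Psi$-limits, into a $\Psi$-flat colimit of actual representables of $\T^\op$. This is exactly where the density and saturation results of~\cite{kelly+schmitt} are brought to bear.
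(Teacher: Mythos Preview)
Your proposal is correct and uses the same two ingredients as the paper: Proposition~\ref{prop:soundness-for-saturated} for $(1)\Leftrightarrow(2)$ and Theorem~8.11 of~\cite{kelly+schmitt} for the remaining equivalences. The paper in fact gives no proof beyond the sentence ``combine the above with Theorem~8.11 of~\cite{kelly+schmitt}'': conditions~(2), (3), (4) are precisely among the equivalent conditions listed in that theorem, so the equivalence $(2)\Leftrightarrow(3)\Leftrightarrow(4)$ is obtained by direct citation rather than by the scaffolding you describe.

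Your intermediate arguments are nonetheless sound. The specialisation $\T=\Psi(\D)^\op$ for $(2)\Rightarrow(4)$ is exactly how that implication is proved inside~\cite{kelly+schmitt}, and your identification of $(4)\Rightarrow(2)$ as the substantive step is accurate. One small caution: you invoke saturation of $\Psi^+$ for $(4)\Rightarrow(3)$ as something ``provided by'' Theorem~8.11, but in that theorem saturation of $\Psi^+$ is one of the \emph{equivalent} conditions, not an unconditional consequence of local smallness. So either cite Theorem~8.11 directly for $(3)\Leftrightarrow(4)$, or first observe that $(4)$ implies saturation of $\Psi^+$ (which is indeed one of the implications proved there) and then deduce $(3)$.
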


\begin{remark}
The class $\Psi_\DD$ for $\DD$=(pullbacks+terminal object)
is locally small and not saturated. It is also not a 
sound class by Remark~2.6 of~\cite{ablr}. 
The saturation $\Psi_\DD^*$ is the class of finitely
presentable weights: this is a locally small, saturated 
and sound class.

In fact, one can always assume that $\Psi$ is 
a locally small saturated class, since it is easy 
to prove the following:
\begin{itemize}
\item[]
{\em If $\Psi$ is a locally small and sound class,
so is $\Psi^*$.\/} 
\end{itemize}
Indeed, if $\Psi$ is locally small, then so is $\Psi^*$.
Suppose $\Psi$ is sound. For proving soundness of $\Psi^*$, suppose $\T$ is a 
$\Psi^*$-theory.
By Lemma~\ref{lem:sound=>nice-algebras} we have 
the equality $\Psi^+(\T^\op)=\Psi\mbox{-}\Alg(\T)$.
Moreover, the equality
$\Psi\mbox{-}\Alg(\T)=\Psi^*\mbox{-}\Alg(\T)$ holds
by the definition of $\Psi^*$. Furthermore, the 
equality $\Psi^+=\Psi^{*+}$ holds by Proposition~5.4
of~\cite{kelly+schmitt}. Hence 
$\Psi^{*+}(\T^\op)=\Psi^*\mbox{-}\Alg(\T)$
holds for every $\Psi^*$-theory $\T$.
The class $\Psi^*$, being locally small and saturated, 
is sound by Proposition~\ref{prop:soundness-for-saturated}.
\end{remark}

\section{An elementary characterisation of sifted weights}
\label{sec:elementary}

In this section we analyse the isomorphism~\eqref{eq:can-coend}
in more detail for the enrichment in $\Cat$.
We then turn the analysis into a useful elementary criterion of 
{\em siftedness\/} of weights enriched in $\Cat$. Finally,
we comment on similarities and differences in using the 
criterion for siftedness in another `2-dimensional enrichment',
namely that in $\Pre$ (the category of preorders and monotone maps). 

\subsection*{An analysis of the coend in~\eqref{eq:can-coend}}
Suppose that $\V=\Cat$. Let $\psi,\phi:\E^\op\to\Cat$ 
be any weights. Then the coend
\begin{equation}
\label{eq:main-coend}
\int^e [\E^\op,\Cat](Ye,\phi)\times [\E^\op,\Cat](\psi,Ye)
\end{equation}
is a category
that can be computed as a coequaliser in $\Cat$ of the parallel
pair
\begin{equation}
\label{eq:L+R}
\vcenter{
\xymatrix{
\displaystyle\coprod_{e,e'} 
[\E^\op,\Cat](Ye',\phi)\times\E(e,e')\times [\E^\op,\Cat](\psi,Ye)
\ar@<.5ex> [0,1]^-{L}
\ar@<-.5ex> [0,1]_-{R}
&
\displaystyle\coprod_e 
[\E^\op,\Cat](Ye,\phi)\times [\E^\op,\Cat](\psi,Ye)
}
}
\end{equation}
of functors
$$
\begin{array}{l}
\\
L:
(\wh{x}:Ye'\to\phi,f:e\to e',\tau:\psi\to Ye)
\mapsto
(\wh{x}\cdot Yf:Ye\to\phi,\tau:\psi\to Ye)
\\
R:
(\wh{x}:Ye'\to\phi,f:e\to e',\tau:\psi\to Ye)
\mapsto
(\wh{x}:Ye'\to\phi,Yf\cdot \tau:\psi\to Ye')
\end{array}
$$
Thus the coend~\eqref{eq:main-coend} has the following desription 
(see, e.g., \cite{lawvere:thesis}):
\begin{enumerate}
\item 
The objects are equivalence classes 
$$
[(\wh{x},\tau)]_\sim
$$
where $\wh{x}:Ye\to\phi$ and $\tau:\psi\to Ye$
are natural transformations. The equivalence is generated
by 
$$
(\wh{x},Yf\cdot \tau)
\sim
(\wh{x}\cdot Yf,\tau)
$$
for all $\wh{x}:Ye\to\phi$, $\tau:\psi\to Ye'$
$f:e'\to e$ in $\D$.
\item
The morphisms are equivalence classes
$$
[((u_1,v_1),\dots,(u_n,v_n))]_\approx
$$
of finite sequences $((u_1,v_1),\dots,(u_n,v_n))$
such that every pair $(u_i,v_i)$ is a morphism in
the category 
$\coprod_e [\E^\op,\Cat](Ye,\phi)\times [\E^\op,\Cat](\psi,Ye)$
and 
$$
\cod (u_1,v_1) \sim \dom (u_2,v_2),
\quad
\cod (u_2,v_2) \sim \dom (u_3,v_3),
\quad
\dots,
\quad
\cod (u_{n-1},v_{n-1}) \sim \dom (u_n,v_n)
$$
The equivalence relation $\approx$ 
is generated from the following two conditions
$$
(u * Yw,v)\approx (u,Yw * v),
\quad
((u_1,v_1),(u_2,v_2))\approx (u_2\cdot u_1,v_2\cdot v_1)
$$
by reflexivity, symmetry, transitivity and composition
(concatenation).
Above, by $*$ we denote the horizontal composition
of natural transformations.
\end{enumerate}
It will be useful to work with the following 
graphical representation. The sequence
$((u_1,v_1),\dots,(u_n,v_n))$ as above is going to 
be depicted as
\begin{equation}
\label{eq:hammock}
\vcenter{
\xymatrix{
\phi
\ar@{=} [0,1]
&
\phi
\ar@{=} [0,1]
&
\phi
\ar@{}[0,1]|-{\cdots}
&
\phi
\ar@{=}[0,1]
&
\phi
\ar@{=}[0,1]
&
\phi
\\
e
\ar[-1,0]^{\wh{x}}
\ar@{~} [0,1]
&
e_1
\ar[-1,0]^{\wh{x}_1}
\ar@{=} [0,1]
\ar@{} [-1,1]|{\stackrel{u_1}{\Rightarrow}}
&
e_1
\ar[-1,0]_{\wh{x}'_1}
\ar@{}[0,1]|-{\cdots}
&
e_{n-1}
\ar[-1,0]^{\wh{x}_n}
\ar@{=} [0,1]
\ar@{} [-1,1]|{\stackrel{u_n}{\Rightarrow}}
&
e_{n-1}
\ar[-1,0]_{\wh{x}'_n}
\ar@{~} [0,1]
&
e'
\ar[-1,0]_{\wh{x}'}
\\
\psi
\ar[-1,0]^{\tau}
\ar@{=}[0,1]
&
\psi
\ar[-1,0]^{\tau_1}
\ar@{=} [0,1]
\ar@{} [-1,1]|{\stackrel{v_1}{\Rightarrow}}
&
\psi
\ar[-1,0]_{\tau'_1}
\ar@{}[0,1]|-{\cdots}
&
\psi
\ar[-1,0]^{\tau_n}
\ar@{=} [0,1]
\ar@{} [-1,1]|{\stackrel{v_n}{\Rightarrow}}
&
\psi
\ar@{=} [0,1]
\ar[-1,0]_{\tau'_n}
&
\psi
\ar[-1,0]_{\tau'}
}
}
\end{equation}
The above picture is called a {\em hammock\/} from
$(\wh{x},\tau)$ to $(\wh{x}',\tau')$. The wiggly
arrow in the above hammock, for example 
from $(\wh{x},\tau)$ to $(\wh{x}_1,\tau_1)$,
represents a zig-zag connecting $e$ and $e_1$ in $\E$
that witnesses the equivalence 
$(\wh{x},\tau)\sim (\wh{x}_1,\tau_1)$.

The whole hammock~\eqref{eq:hammock} gets evaluated to 
the composite modification
$$
(u_n*v_n)\cdot (u_{n-1}*v_{n-1})\cdot \dots \cdot (u_1*v_1):
\wh{x}\cdot\tau
\to
\wh{x}'\cdot\tau'
$$
in $[\E^\op,\Cat]$. Up to the equivalence $\approx$,
this is how the evaluation functor $\can$ works.

The functor $\can$ is an isomorphism of categories 
iff it is bijective on objects and fully faithful. 
Hence, the following two conditions have to hold:
\begin{enumerate}
\item 
{\em The 1-dimensional aspect.\/}
To give $\alpha:\psi\to\phi$ is to give
a unique $[(\wh{x},\tau)]_\sim$ such that
$\wh{x}\cdot\tau=\alpha$ holds.
\item
{\em The 2-dimensional aspect.\/}
To give a modification $\Xi:\alpha\to\alpha'$
is to give a unique equivalence class
$[((u_1,v_1),\dots,(u_n,v_u))]_\approx$
such that $\Xi$ is the
composite $(u_n * v_n)\cdot\dots\cdot (u_1* v_1)$. 
\end{enumerate}

\subsection*{Siftedness for enrichment in categories}
We are going to fix the class $\Pi$ of (conical)
weights for finite products, see Example~\ref{ex:Psi_D}.
Recall, by Example~\ref{ex:Psi_D} again, 
that the $\Pi$-flat weights are called sifted.

It is proved in~\cite{kelly+lack:strongly-finitary}
that the class $\Pi_1(\E)$ of testing weights 
for siftedness of a weight
$\phi:\E^\op\to\Cat$ can be reduced further
to the empty coproduct $\const_0:\E^\op\to\Cat$ 
of representables and to binary coproducts
$\E(-,e_1)+\E(-,e_2):\E^\op\to\Cat$. Hence, 
using~\eqref{eq:can-coend}, the following result holds:

\begin{lemma}
\label{lem:sifted=by-parts}
A weight $\phi:\E^\op\to\Cat$ is sifted iff
the following two conditions hold:
\begin{enumerate}
\item
The unique functor from $\int^e \phi e$ 
to the one-morphism category $\One$ is
an isomorphism.
\item
For any $e_1$, $e_2$ in $\E$, the canonical morphism 
$$
\can:\int^e \phi e\times\E(e_1,e)\times\E(e_2,e)
\to
\phi e_1
\times
\phi e_2
$$
is an isomorphism. 
\end{enumerate}
\end{lemma}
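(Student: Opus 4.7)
The plan is to apply Proposition~\ref{prop:characterisation-of-soundness} (soundness of $\Pi$ being ensured by Example~\ref{ex:sound-classes}(2)), together with the result of~\cite{kelly+lack:strongly-finitary} already alluded to just before the statement, which says that in order to test $\Pi$-flatness of $\phi:\E^\op\to\Cat$ it suffices to verify the isomorphism~\eqref{eq:can-coend} on the two specific classes of weights $\psi=\const_0:\E^\op\to\Cat$ (the empty coproduct of representables) and $\psi=Ye_1+Ye_2:\E^\op\to\Cat$ (binary coproducts of representables). Thus conditions~(1) and~(2) of the lemma are precisely the instantiations of~\eqref{eq:can-coend} for these two families of testing weights, and the content of the proof is to identify each instance of~\eqref{eq:can-coend} with the stated coend formula.

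For case~(1), take $\psi=\const_0$. By the universal property of the initial object in $[\E^\op,\Cat]$ we have $[\E^\op,\Cat](\const_0,Ye)\cong\One$ and $[\E^\op,\Cat](\const_0,\phi)\cong\One$, while $[\E^\op,\Cat](Ye,\phi)\cong\phi e$ by the Yoneda lemma. Substituting into~\eqref{eq:can-coend} gives the canonical functor $\int^e\phi e\to\One$, which must therefore be an isomorphism. For case~(2), take $\psi=Ye_1+Ye_2$. Since the hom out of a coproduct is a product, $[\E^\op,\Cat](Ye_1+Ye_2,Ye)\cong\E(e_1,e)\times\E(e_2,e)$ and similarly $[\E^\op,\Cat](Ye_1+Ye_2,\phi)\cong\phi e_1\times\phi e_2$. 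Again using Yoneda for the other factor, \eqref{eq:can-coend} becomes exactly the canonical map of condition~(2).

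It remains to verify that the canonical comparison produced by~\eqref{eq:can-coend} agrees, under these identifications, with the obvious canonical comparisons stated in the lemma. In both cases this is a direct consequence of how the coend in~\eqref{eq:can-coend} is defined from composition in $[\E^\op,\Cat]$: an equivalence class $[(\wh{x},\tau)]_\sim$ with $\wh{x}:Ye\to\phi$ and $\tau:\psi\to Ye$ is sent to $\wh{x}\cdot\tau:\psi\to\phi$, and the modification layer of the coend is sent by horizontal composition; under the Yoneda identifications this is precisely the ``evaluation'' map appearing in~(1) and~(2).

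The mildly delicate step will be the second (2-dimensional) layer, since the hammock description preceding the statement must be shown to match on the nose the action of the obvious canonical morphism between categories of the form displayed in~(1) and~(2). However, this is purely a matter of tracing through how horizontal composition of modifications gets translated under the Yoneda isomorphisms, and it poses no real obstacle: once objects match by Yoneda, morphisms match by the same token applied fibrewise. Putting these identifications together with Proposition~\ref{prop:characterisation-of-soundness} and the reduction of~\cite{kelly+lack:strongly-finitary} completes the proof.
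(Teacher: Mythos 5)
Your proposal is correct and takes essentially the same route as the paper: the lemma is obtained there, too, by combining the Kelly--Lack reduction of the testing weights in $\Pi_1(\E)$ to the empty coproduct $\const_0$ and the binary coproducts $Ye_1+Ye_2$ with the coend form~\eqref{eq:can-coend} of the canonical morphism, the Yoneda identifications being exactly the ones you spell out.
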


\begin{remark}
By analogy to the case $\V=\Set$, we may call the first
condition above {\em connectedness\/} of the weight 
$\phi:\E^\op\to\Cat$
and the second condition expresses that the diagonal
2-functor $\Delta:\E\to\E\times\E$ is {\em cofinal\/} in the
sense that the 2-cell
$$
\xymatrix{
\E^\op
\ar[0,2]^-{\Delta^\op}
\ar[1,1]_{\phi}
&
\ar@{}[1,0]|(.4){\stackrel{\delta}{\Rightarrow}}
&
\E^\op\times\E^\op
\ar[1,-1]^{\phantom{M}(e_1,e_2)\mapsto \phi e_1\times \phi e_2}
\\
&
\Cat
&
}
$$
where $\delta_e:\phi e\to \phi e\times \phi e$ is the diagonal
functor, is a left Kan extension. Indeed, it suffices to consider
the isomorphism
$$
\int^e \phi e\times \E(e_1,e)\times \E(e_2,e)
\cong
\int^e \phi e \times (\E^\op\times\E^\op)(\Delta^\op e,(e_1,e_2))
$$
\end{remark}

We apply the criteria of Lemma~\ref{lem:sifted=by-parts},
together with the analysis of~\eqref{eq:can-coend} using
hammocks, for giving elementary proofs of
siftedness of various weights. 

\begin{example}[\bf A weight that is not sifted]
\label{ex:sifted-but-not-cat-sifted}
We start with an example of a weight $\phi:\E^\op\to\Cat$
that is not sifted, although the `underlying' ordinary
functor 
$$
\xymatrix{
\E^\op_o
\ar[0,1]^-{\phi_o}
&
\Cat_o
\ar[0,1]^-{{\mathsf{ob}}}
&
\Set
}
$$
is sifted.

Consider the one-morphism category $\S$ with the only object 
$s$. Denote by $\E^\op$ the free completion of $\S$ under 
finite products. It follows immediately that the only 2-cells 
in $\E^\op$ are identities. 

Let $\chi:\E^\op\to\Cat$ be the product-preserving 
functor defined by $\chi(s) = \Two$, where $\Two$ is the
two-element chain, considered as a category. 
We define $\phi$ to be the following modification of $\chi$: 
where $\chi(s^n) = \Two^n$, we let $\phi(s^n) = 2^n$ 
for every $n > 1$. 
The structure on $2^n$ is that of an almost discrete 
preorder with the only 
nontrivial inequality being $(0,\dots,0) \leq (1,\dots,1)$. 
The action of $\phi$ on morphisms is defined as for $\chi$. 
Of course, $\phi$ does \emph{not} preserve products, but
the composite
$$
\xymatrix{
\E^\op_o
\ar[0,1]^-{\phi_o}
&
\Cat_o
\ar[0,1]^-{{\mathsf{ob}}}
&
\Set
}
$$
does; in fact, it is not hard to see that this ordinary 
functor constitutes an algebra for the ordinary algebraic theory 
$\E^\op_o$ and thus it is a sifted weight
by~\cite{adamek+rosicky+vitale}.

It is enough now to find pairs $(x_1,x_2)$ and 
$(y_1,y_2)$ from $\phi (s) \times \phi (s)$ such that 
$(x_1,x_2) \leq (y_1,y_2)$ holds but there is no hammock 
to witness this inequality. 
Consider $(x_1,x_2) = (0,1)$ and $(y_1,y_2) = (1,1)$. 
Firstly, we make use of the fact that there are no nontrivial
2-cells in $\E^\op$. This implies that the `lax' parts 
of the hammock consist only of inequalities between 
the elements of $\phi(s^n) = 2^n$ for some $s^n$. 
But these are precisely the diagonal inequalities 
$(0,\dots,0) \leq (1,\dots,1)$. Together with the fact 
that the only morphisms of the form $s^n \to s$ in $\E^\op$ 
are the product projections, it is easy to see that there 
is no way how any hammock could evaluate its right-hand side 
to $(1,1)$ and its left-hand side to $(0,1)$.
\end{example}

\begin{remark}
\label{rem:1-dim-vs-2-dim}
Siftedness of the composite 
${\mathsf{ob}}\cdot\phi_o:\E_o^\op\to\Set$
establishes precisely the 1-dimensional aspect
of siftedness: the functor $\can$ is bijective on
objects iff ${\mathsf{ob}}\cdot\phi_o$ is sifted.
From this it immediately follows that a weight
$\phi:\E^\op\to\Cat$ with $\E$ locally discrete
(i.e., with only the identity 2-cells) and such
that every $\phi e$ is a discrete category is sifted
iff the composite ${\mathsf{ob}}\cdot\phi_o:\E_o^\op\to\Set$
is sifted in the ordinary sense.

The 2-dimensional aspect of siftedness
of $\phi:\E^\op\to\Cat$ has to be verified in general.
Example~\ref{ex:sifted-but-not-cat-sifted}
exhibits such a situation when $\E$ is locally
discrete and Example~\ref{ex:conical-not-sifted}
shows a conical weight $\const_1:\E^\op\to\Cat$
that is not sifted although 
the underlying ordinary category $\E_o$ is sifted
in the ordinary sense.
\end{remark}

\begin{example}[\bf Siftedness for weights based on the simplicial
category]\label{ex:simpl-weight-sifted}
Recall from, e.g., \cite{maclane:cwm}, that the simplicial 
category $\Delta$ has finite ordinals as objects and monotone
maps as morphisms. It can be proved rather easily that the
morphisms of $\Delta$ can be obtained from $\id_1:1\to 1$,
$\eta:0\to 1$ and $\mu:2\to 1$ by ordinal sums subject to
monad axioms. Hence we will draw the morphisms of $\Delta$ 
as string diagrams that are generated from the following strings
$$
\begin{tikzpicture}[scale=.5]
\point{0}{0.5}
\point{2}{0.5}
\straight{0}{0.5}
\unit{6}{0.5}
\point{8}{0.5}
\straight{6}{0.5}
\mult{12}{0}
\point{12}{0}
\point{12}{1}
\point{14}{0.5}
\end{tikzpicture}
$$
that represent $\id_1:1\to 1$, $\eta:0\to 1$ and $\mu:2\to 1$,
respectively, by vertical concatenation that is 
subject to the unit axioms
$$
\begin{tikzpicture}[scale=.5]
\mult{0}{0}
\point{0}{0}
\unit{0}{1}
\point{2}{0.5}

\node at (3,0.5) {=};

{
	[shift={(4,0.5)}]
	
\straight{0}{0}
\point{0}{0}
\point{2}{0}
	
}
\node at (7,0.5) {=};

{
	[shift={(8,0)}]
	
\mult{0}{0}
\point{0}{1}
\unit{0}{0}
\point{2}{0.5}
}
\end{tikzpicture}
$$
and the associativity axiom
$$
\begin{tikzpicture}[scale=.5]

\point{0}{0.5}
\point{0}{1.5}
\point{2}{0}
\point{4}{0.5}
\mult{0}{0.5}
\mult{2}{0}

\node at (6,0.5) {=};

{
	[shift={(8,0)}]
	
\point{0}{0}
\point{0}{1}
\point{2}{1.5}
\point{4}{1}
\mult{0}{0}
\mult{2}{0.5}

}

\end{tikzpicture}
$$

We show that both the conical weight on $\Delta$
and the weight given by inclusion of $\Delta$ into
$\Cat$ are sifted weights. In fact, from our reasoning 
it will be clear
that the same holds of almost any truncation $\Delta_n$.
The truncated category $\Delta_n$ 
is just the full subcategory of $\Delta$ spanned by 
finite ordinals up to $n$.
\begin{enumerate}
\item
It is known that $\const_1: \Delta \to \Set$ is an ordinary sifted weight, and therefore even the conical weight $\const_1: \Delta \to \Cat$ is sifted due to the fact that there are no non-trivial 2-cells in $\Delta$, see Remark~\ref{rem:1-dim-vs-2-dim}. 
Every truncation $\Delta_n$ (for $n \geq 1$) of the simplicial category $\Delta$ gives rise to a conical sifted weight as well.
\item
Suppose the weight $\phi: \Delta \to \Cat$ is given by inclusion.
Here 
$$
\xymatrix{
\Delta_o 
\ar[0,1]^-{\phi_o}
&
\Cat_o 
\ar[0,1]^-{\mathsf{ob}}
&
\Set
}
$$ 
is a representable weight $\Delta_o(1,-)$.
For each object $n$ of $\Delta$ the category $\phi(n)$ is the free linearly ordered category on an $n$-element chain.
We will show an elementary proof that $\phi$ is a sifted weight. First of all, let us check that the coend $\int^n \phi n$ is isomorphic to the one-morphism category $\One$. Of course, the category $\int^n \phi n$ has precisely one object: given any two objects $x \in \phi(n)$ and $y \in \phi(m)$, they are equivalent by $\sim$ if there exists a string diagram $\sigma: \phi(n) \to \phi(m)$ such that $x$ gets mapped to $y$ by $\sigma$. A diagram like this always exists; we illustrate this on an example situation with $n = 4$ and $m = 3$:
$$
\begin{tikzpicture}[scale=0.5]

\point{0}{0}
\point{0}{1}
\point{0}{2}
\point{0}{3}
\node at (-0.5,2) {$x$};
\stepaside{0}{0}{1}
\mult{0}{1}
\mult{2}{0.5}
\stepaside{0}{3}{-1}
\stepaside{2}{2.5}{-1}

{
	[shift={(4,0)}]

\point{0}{0}
\point{0}{1}
\point{0}{2}
\node at (0.5,1) {$y$};
}
\end{tikzpicture}
$$
Now given any morphism $f: x \to x^\prime$ in $\phi(n)$, we show that $f \approx \id_*$, where $\id_*$ is the identity morphism on the only object $*$ of $\phi(1)$. This is again immediate when using the string diagrams: consider the only string diagram $!: \phi(n) \to \phi(1)$. It maps all morphisms in $\phi(n)$ to the identity morphism, see for example the diagram below.
$$
\begin{tikzpicture}[scale=0.5]

\point{0}{0}
\point{0}{1}
\point{0}{2}
\point{0}{3}
\node (x) at (-0.5,1) {$x$};
\node (xp) at (-0.5,2) {$x^\prime$};
\draw[out=160,in=200,->] (x) edge (xp);

\stepaside{0}{0}{1}
\mult{0}{1}
\mult{2}{0.5}
\stepaside{0}{3}{-1}
\stepaside{2}{2.5}{-1}
\mult{4}{1}

{
	[shift={(6,1.5)}]

\point{0}{0}

\node (y) at (2,0) {${!}(x) = {!}(x^\prime)$};

}
\end{tikzpicture}
$$
So the category $\int^n \phi n$ indeed has only one morphism. Now we show the isomorphism
$$
\int^n \phi n\times\Delta(n,n_1)\times\Delta(n,n_2)
\cong
\phi n_1
\times
\phi n_2
$$ 
by showing that the canonical morphism is bijective on objects and fully faithful. On objects, the canonical morphism takes an object $x \in \phi(n)$, two string diagrams $\sigma: \phi(n) \to \phi(n_1)$ and $\tau: \phi(n) \to \phi(n_2)$, and computes the pair $(\sigma(x),\tau(x))$. It is immediate that for any pair $(y,z)$ in $\phi n_1
\times
\phi n_2$ there exists a tuple $(x,\sigma,\tau)$ that is mapped to $(y,z)$. More is true: we can always choose $x = * \in \phi(1)$ and the string diagrams $\sigma,\tau$ are the obvious diagrams choosing $y$ and $z$, respectively.
$$
\begin{tikzpicture}[scale=0.5]
\node (x) at (-1,0) {$x$};
\point{0}{0}

\point{4}{-1}
\point{4}{0}
\point{4}{1}
\node (y) at (5,1) {$y$};
\stepaside{0}{0}{2}
\straight{2}{1}

\unit{2}{-1}
\unit{2}{0}
\straight{2}{-1}
\straight{2}{0}

{ [shift={(8,0)}]

\node (x) at (-1,0) {$x$};
\point{0}{0}

\point{4}{-0.5}
\point{4}{0.5}
\node (z) at (5,-0.5) {$z$};
\stepaside{0}{0}{-1}
\straight{2}{-0.5}

\unit{2}{0.5}
\straight{2}{0.5}

}
\end{tikzpicture}
$$
This proves that $\can$ is bijective on objects. In order to prove that $\can$ is full, we will show that given any pair of morphisms $g: y \to y^\prime$ and $h: z \to z^\prime$ in $\phi(m)$ and $\phi(p)$ respectively, there is a morphism $f: x \to x^\prime$ in $\phi(n)$ and two string diagrams sending the morphism $f$ to $g$ and $h$, respectively. But there is again a canonical such $f: x \to x^\prime$ in $\phi(2)$ with the obvious inclusions, as is shown in the example diagram below.
$$
\begin{tikzpicture}[scale=0.5]

\point{0}{1}
\point{0}{2}

\node (x) at (-1,1) {$x$};
\node (xp) at (-1,2) {$x^\prime$};
\draw[out=160,in=200,->] (x) edge (xp);

\straight{0}{2}
\stepaside{0}{1}{-2}

\unit{2}{1}
\unit{2}{3}
\straight{2}{0}
\straight{2}{1}
\straight{2}{2}
\straight{2}{3}

{
	[shift={(4,0)}]

\point{0}{0}
\point{0}{1}
\point{0}{2}
\point{0}{3}
\node (y) at (1,0) {$y$};
\node (yp) at (1,2) {$y^\prime$};
\draw[out=40,in=-40,->] (y) edge (yp);
}

{
	[shift={(10,1)}]

\node (x) at (-1,0) {$x$};
\node (xp) at (-1,1) {$x^\prime$};
\draw[out=160,in=200,->] (x) edge (xp);

\point{0}{0}
\point{0}{1}

\point{4}{0}
\point{4}{1}
\point{4}{2}

\node (y) at (5,1) {$y$};
\node (yp) at (5,2) {$y^\prime$};
\draw[out=30,in=-30,->] (y) edge (yp);

\stepaside{0}{0}{2}
\straight{2}{1}

\unit{2}{0}
\straight{2}{0}

\stepaside{0}{1}{2}
\straight{2}{2}
}
\end{tikzpicture}
$$
Thus we have proved fullness \emph{and} faithfulness of the canonical functor $\can$. The weight $\phi$ is sifted.

We have actually proved that any truncation $\phi_n : \Delta_n \to \Cat$ of the inclusion weight is also sifted for $n \geq 2$.
\end{enumerate}
\end{example}

The 2-dimensional aspect of siftedness is crucial for $\Cat$-enriched weights even in the case of conical weights, as we show in the following easy example.

\begin{example}[\bf A conical weight that is not sifted]
\label{ex:conical-not-sifted}
Consider the diagram scheme for reflexive coequalisers
satisfying $\delta_0 \cdot \sigma = \delta_1 \cdot \sigma = \id_1$, 
and adjoin freely a 2-cell $\alpha$ to it:
\[ 
\xy 
(-8,0)*+{2}="4"; 
(8,0)*+{1}="6"; 
{\ar_>>>{\sigma} "6";"4"}; 
{\ar@/^1.75pc/^{\delta_0} "4";"6"}; 
{\ar@/_1.75pc/_{\delta_1} "4";"6"}; 
{\ar@{=}^<<{} (0,6)*{};(0,1)*{}} ; 
{\ar@{=>}^<<<{\scriptstyle \alpha} (0,-1)*{};(0,-6)*{}} ; 
\endxy 
\]
The resulting 2-category $\E$, when considered as
a conical weight, is \emph{not} sifted, although
the underlying ordinary category $\E_o$ is sifted
in the ordinary sense (see, e.g., Chapter~3 of~\cite{adamek+rosicky+vitale}).
\end{example}

\begin{example}[\bf Siftedness for the weight for Kleisli objects]
\label{ex:kleisli}
The weight $\phi:\E^\op\to\Cat$ such that $\phi$-colimits yield Kleisli objects is described in~\cite{lawvere:ordinal-sums}. We will recall the definition of the weight $\phi$ and prove that it is sifted. That the weight $\phi$ is sifted is known from Proposition~8.43 in~\cite{bourke:thesis}: in this example we show an elementary proof of this fact.

The 2-category $\E$ is the suspension $\Sigma\Delta$ of the simplicial category $\Delta$. This means that $\E$ has a unique object, say $e_0$, and that the hom-category $\E(e_0,e_0)$ is the category $\Delta$. Morphisms in $\E$ are finite ordinals, and the 2-cells are `monad-like' string diagrams as described in Example~\ref{ex:simpl-weight-sifted}. 

The category $\phi(e_0)$ is defined as follows: the objects are finite \emph{non-zero} ordinals, that is, objects of the form $1 + n$ for some $n < \omega$. Every object $1 + n$ is understood as a $(n + 1)$-element chain with a distinguished bottom element. The morphisms in $\phi(e_0)$ are precisely the monotone maps that preserve the distinguished bottom element. This definition of $\phi(e_0)$ again allows a pictorial description in terms of string diagrams. The morphisms in $\phi(e_0)$ are string diagrams generated by the basic diagrams
$$
\begin{tikzpicture}[scale=.5]
\point{0}{0.5}
\point{2}{0.5}
\straight{0}{0.5}
\unit{6}{0.5}
\point{8}{0.5}
\straight{6}{0.5}
\mult{12}{0}
\point{12}{0}
\point{12}{1}
\point{14}{0.5}
\rec{18}{0.5}
\rec{20}{0.5}
\waste{18}{0.5}

\point{24}{0}
\stepaside{24}{0}{2} 
\rec{24}{1}
\rec{26}{1}
\waste{24}{1}
\end{tikzpicture}
$$
subject to monad axioms and the two axioms
$$
\begin{tikzpicture}[scale=0.5]

{
	[shift={(0,0.5)}]
	
\unit{0}{0}
\stepaside{0}{0}{2} 
\rec{0}{1}
\rec{2}{1}
\waste{0}{1}

\node at (3,0.5) {=};

\rec{4}{1}
\rec{6}{1}
\waste{4}{1}
}

{
	[shift={(12,0)}]

\mult{0}{0}
\point{0}{0}
\point{0}{1}
\stepaside{2}{0.5}{3}
\rec{0}{2}
\rec{4}{2}
\waste{0}{2} \waste{2}{2}

\node at (6,1) {=};

{
	[shift={(8,0)}]
	
\point{0}{0}
\point{0}{1}
\stepaside{0}{1}{2}
\stepaside{0}{0}{2}
\stepaside{2}{1}{2}

\rec{0}{2}
\rec{4}{2}
\waste{0}{2} \waste{2}{2}
}

}
\end{tikzpicture}
$$
that express the fact that the diagram 
\begin{tikzpicture}[scale=0.5,baseline=2]
\point{0}{0}
\stepaside{0}{0}{2} 
\rec{0}{1}
\rec{2}{1}
\waste{0}{1}
\end{tikzpicture}
is an algebra for the monad given by the unit 
\begin{tikzpicture}[scale=0.5,baseline=2]
\unit{0}{0.5}
\point{2}{0.5}
\straight{0}{0.5}

\node (tmp) at (-0.5,0.5) {};
\node (tmp2) at (2.5,0.5) {};
\end{tikzpicture}
 and multiplication
\begin{tikzpicture}[scale=0.5,baseline=2]
\mult{0}{0}
\point{0}{0}
\point{0}{1}
\point{2}{0.5}
\node (tmp) at (-0.5,0.5) {};
\node (tmp2) at (2.5,0.5) {};
\end{tikzpicture}.

The 2-functor $\phi: \E^\op \to \Cat$ is defined on the morphisms and 2-cells of $\E^\op$ by concatenation: for a given morphism $n: e_0 \to e_0$, the functor $\phi(n): \phi(e_0) \to \phi(e_0)$ maps an object $1 + m \in \phi(e_0)$ to the object $1 + m + n$. A string diagram $s$ in $\phi(e_0)$ is mapped to the diagram $\phi(n)(s)$, defined as the diagram $s$ concatenated $n$ identity strings. We show an example of this assignment for $n = 1$:
\begin{center}
\begin{tikzpicture}[scale=0.5]
\mult{0}{0}
\point{0}{0}
\point{0}{1}
\stepaside{2}{0.5}{3} 
\rec{0}{2}
\rec{4}{2}
\waste{0}{2} \waste{2}{2}

\node at (6,1) {$\mapsto$};

{
	[shift={(8,0)}]
	
\point{0}{-1}
\point{4}{-1}
\straight{0}{-1}
\straight{2}{-1}
\mult{0}{0}
\point{0}{0}
\point{0}{1}
\stepaside{2}{0.5}{3} 
\rec{0}{2}
\rec{4}{2}
\waste{0}{2} \waste{2}{2}
}
\end{tikzpicture}
\end{center}
Likewise, given a 2-cell $\theta: m \to n$ in $\E$, the natural transformation $\phi(\theta)$ is defined componentwise: for an object $1+ m$ in $\phi(e_0)$, the morphism $\phi(\theta)_{1+m}$ is the concatenation of the identity diagram on $1 + m$ with the diagram $\theta$. For example, given the diagram  \begin{tikzpicture}[scale=0.5,baseline=2]
\unit{0}{0.5}
\point{2}{0.5}
\straight{0}{0.5}

\node (tmp) at (-0.5,0.5) {};
\node (tmp2) at (2.5,0.5) {};
\end{tikzpicture} as $\theta$ and $m = 2$, the component $\phi(\theta)_3$ is the following string diagram in $\phi(e_0)$:
$$
\begin{tikzpicture}[scale=0.5]
\point{0}{1}
\point{0}{2}
\point{4}{2}
\point{4}{1}
\point{4}{0}

\unit{2}{0}

\rec{0}{3}
\rec{4}{3}
\waste{0}{3}
\straight{0}{2}
\straight{0}{1}
\waste{2}{3}
\straight{2}{2}
\straight{2}{1}
\straight{2}{0}
\end{tikzpicture}
$$

Now to prove that $\phi$ is a sifted weight, we need to verify that there are canonical isomorphisms
\begin{equation}\label{eq:canons-sifted}
\int^e \phi e \cong \One,
\qquad
\int^e \phi e\times\E(e_0,e)\times\E(e_0,e)
\cong
\phi e_0
\times
\phi e_0
\end{equation}
proving that $\Colim{\phi}{(-)}$ preserves nullary and binary products. We first analyse parts of a general hammock \eqref{eq:hammock} for the weight $\phi$ with the testing weight $\psi = \coprod_{i \in I} \E({-},e_i)$. The left-hand side rectangle on the diagram below
$$\begin{tikzpicture}
\xymatrix{
\phi
\ar@{=} [0,1]
&
\phi
\\
e_0
\ar[-1,0]^{x}
\ar@{->} [0,1]^{f}
&
e_0
\ar[-1,0]_{y}
\\
\psi
\ar[-1,0]^{s_i}
\ar@{=}[0,1]
&
\psi
\ar[-1,0]_{t_i}
}
\end{tikzpicture}
\qquad
\qquad
\qquad
\qquad
\begin{tikzpicture}[scale=0.5,baseline=(current bounding box.north)]

\node (x) at (-1,4) {$x$};
\node (si) at (-1,0.5) {$s_i$};

{ [red]
\draw (-0.5,1.6) rectangle (0.5,6.5);
}

{ [blue]
\draw (-0.5,-0.5) rectangle (0.5,1.4);
}

\point{0}{0}
\point{0}{1}
\point{0}{2}
\point{0}{3}
\point{0}{4}
\point{0}{5}
\rec{0}{6}

\node at (2,3) {$\sim$};

{
	[shift={(4,0)}]

\node (y) at (1,5) {$y$};
\node (ti) at (1,1.5) {$t_i$};

{ [red]
\draw (-0.5,3.6) rectangle (0.5,6.5);
}

{ [blue]
\draw (-0.5,-0.5) rectangle (0.5,3.4);
}

\point{0}{0}
\point{0}{1}
\point{0}{2}
\point{0}{3}
\point{0}{4}
\point{0}{5}
\rec{0}{6}	
}
\end{tikzpicture}
$$
represents the information that for each $i \in I$ and the morphisms given in the diagram we have that equalities $f + s_i = t_i$ and $x = y + f$ hold in natural numbers. This situation is depicted on the right-hand side of the above diagram. In general, the tuples $(x,s_i)$ and $(y,t_i)$ are related by the equivalence relation $\sim$ if and only if $x + s_i = y + t_i$ holds for all $i \in I$.

The rectangle of the form
$$
\xymatrix{
\phi
\ar@{=} [0,1]
&
\phi
\\
e_0
\ar[-1,0]^{x}
\ar@{->} [0,1]^{f}
\ar@{} [-1,1]|{\stackrel{u}{\Rightarrow}}
&
e_0
\ar[-1,0]_{y}
\\
\psi
\ar[-1,0]^{s_i}
\ar@{=}[0,1]
\ar@{} [-1,1]|{\stackrel{v_i}{\Rightarrow}}
&
\psi
\ar[-1,0]_{t_i}
}
$$
is represented by the concatenation of two string diagrams $u$ and $v_i$ for each $i \in I$.
$$
\begin{tikzpicture}[scale=0.5]

{ [red]
\draw (-0.5,3.6) rectangle (2.5,7.5);
}

{ [blue]
\draw (-0.5,1.5) rectangle (2.5,3.4);
}

\point{0}{2}
\point{0}{3}
\point{0}{4}
\point{0}{5}
\point{0}{6}
\mult{0}{2}
\mult{0}{4}
\stepaside{0}{6}{2}
\waste{0}{7}
\rec{0}{7}
\rec{2}{7}

{ [shift={(2,0.5)}]

\point{0}{2}
\point{0}{4}
}

\node at (4,4) {$\approx$};

{
	[shift={(6,0)}]
{ [red]
\draw (-0.5,5.6) rectangle (2.5,7.5);
}

{ [blue]
\draw (-0.5,1.5) rectangle (2.5,5.4);
}

\point{0}{2}
\point{0}{3}
\point{0}{4}
\point{0}{5}
\point{0}{6}
\mult{0}{2}
\mult{0}{4}
\stepaside{0}{6}{2}
\waste{0}{7}
\rec{0}{7}
\rec{2}{7}

{ [shift={(2,0.5)}]

\point{0}{2}
\point{0}{4}
}
}
\end{tikzpicture}
$$
The above diagram is an example of string diagrams that are equivalent: the `sliding' of the division between the string diagrams generates the equivalence relation $\approx$. Observe moreover that morphisms in the coend are $n$-tuples of composable string diagrams. Any such $n$-tuple is equivalent to a $1$-tuple, but the fact that we are allowed to vertically `decompose' any string diagram to $n$ parts is important in the proof of siftedness for $\phi$. In the following diagram
$$
\begin{tikzpicture}[scale=0.5]

{ [red]
\draw (-0.5,3.6) rectangle (2.5,7.5);
}

{ [blue]
\draw (-0.5,-0.5) rectangle (2.5,3.4);
}

\point{0}{0}
\point{0}{1}
\point{0}{2}
\point{0}{3}
\point{0}{4}
\point{0}{5}
\point{0}{6}
\mult{0}{0}
\stepaside{0}{2}{-1}
\stepaside{0}{3}{-1}
\mult{0}{4}
\stepaside{0}{6}{2}
\waste{0}{7}\waste{2}{7}
\rec{0}{7}
\rec{2}{7}
\rec{4}{7}

{ [shift={(2,0.5)}]

{ [red]
\draw (-0.5,1.6) rectangle (2.5,7);
}

{ [blue]
\draw (-0.5,-0.5) rectangle (2.5,1.4);
}

\point{0}{0}
\point{0}{1}
\point{0}{2}
\point{0}{4}

\mult{0}{0}
\straight{0}{2}
\stepaside{0}{4}{5}

\point{2}{0.5}
\point{2}{2}
}
\end{tikzpicture}
$$
we can see such a decomposition of a string diagram into a $2$-tuple of shorter string diagrams.

With the complete description of the weight $\phi$ and of the hammocks, we can conclude that we have the canonical isomorphisms in~\eqref{eq:canons-sifted}:
\begin{enumerate}
\item The weight $\phi$ satisfies the isomorphism
$$\int^e \phi(e) \cong \One.$$
Indeed, the coend $\int^e \phi(e)$ has precisely one object: any pair $1 + n$ and $1 + m$ of objects in $\phi(e_0)$ is related by a hammock of length 2:
$$
\xymatrix{
\phi
\ar@{=} [0,1]
&
\phi
\ar@{=} [0,1]
&
\phi
\\
e_0
\ar[-1,0]^{1+n}
\ar[0,1]_{n}
&
e_0
\ar[-1,0]^{1}
&
e_0
\ar[-1,0]_{1+m}
\ar[0,-1]^{m}
}
$$
To show that $\int^e \phi(e)$ has a unique morphism, we will prove that any string diagram $\sigma: 1 + m \to 1 + n$ is congruent by the equivalence relation $\approx$ to an identity string diagram $\id_k: k \to k$ for some natural number $k$. We have to distinguish two cases. If the diagram $\sigma$ does not contain \begin{tikzpicture}[scale=0.5,baseline=2]
\point{0}{0}
\stepaside{0}{0}{2} 
\rec{0}{1}
\rec{2}{1}
\waste{0}{1}
\end{tikzpicture}
as a subdiagram, then it is trivially a concatenation of two string diagrams $\sigma_0 = \id_1: 1 \to 1$ and $\sigma_1: m \to n$, and therefore $\sigma \approx \id_1$ holds. If $\sigma$ contains \begin{tikzpicture}[scale=0.5,baseline=2]
\point{0}{0}
\stepaside{0}{0}{2} 
\rec{0}{1}
\rec{2}{1}
\waste{0}{1}
\end{tikzpicture}, then it is necessary to factor it into a composition of two diagrams (and denote the red part of the diagram by $\omega$):
\begin{center}
\begin{tikzpicture}[scale=0.5]

{ [red]
\draw (-0.5,6.6) rectangle (2.5,7.5);
}

{ [blue]
\draw (-0.5,3.5) rectangle (2.5,6.4);
}

\waste{0}{7}\waste{2}{7}
\rec{0}{7}\rec{2}{7}\rec{4}{7}

\node at (0.5,5) {$\sigma_1$};

{ [shift={(2,0)}]

{ [red]
\draw (-0.5,5.6) rectangle (2.5,7.5);
}

{ [blue]
\draw (-0.5,3.5) rectangle (2.5,5.4);
}

\point{0}{6}
\stepaside{0}{6}{2}
\node at (1.5,4.5) {$\id_n$};

}
\end{tikzpicture}
\end{center}
This decomposition is unique. Take the identity morphism $\id_n$ and decompose it in the same way into a concatenation of $\omega$ with $(\tau_1,\id_n)$. By the first case we have that $\tau_1 \approx \sigma_1$, and the equivalence $\id_n \approx \id_n$ is trivial. This decomposition thus witnesses the equivalence $\sigma \approx \id_n$.
\item The second isomorphism
$$
\int^e \phi e\times\E(e_0,e)\times\E(e_0,e)
\cong
\phi e_0
\times
\phi e_0
$$
is proved similarly. Given two objects $1 + m$ and $1 + n$ from $\phi(e_0)$, there is a triple $(1,m,n)$ that gets mapped exactly to $(1 + m, 1 + n)$ by the canonical functor. For any other triple $(k,m^\prime,n^\prime)$ that is mapped to $(1 + m, 1 + n)$ we have the equalities $k + m^\prime = 1 + m$ and $k + n^\prime = 1 + n$. Therefore $(1,m,n) \sim (k,m^\prime,n^\prime)$ holds and the canonical functor is bijective on objects.

To prove that the canonical functor is full, we show that for any two string diagrams $\sigma: 1 + m \to 1 + n$ and $\tau: 1 + p \to 1 + q$ there is a triple $(\omega,\alpha,\beta)$ getting mapped to $(\sigma,\tau)$. But again, as in the case of the first isomorphism, take $\omega$ to be the diagram
\begin{center}
\begin{tikzpicture}[scale=0.5]

{ [red]
\draw (-0.5,6.6) rectangle (2.5,7.5);
}

\waste{0}{7}\waste{2}{7}
\rec{0}{7}\rec{2}{7}\rec{4}{7}

{ [shift={(2,0)}]

{ [red]
\draw (-0.5,5.6) rectangle (2.5,7.5);
}

\point{0}{6}
\stepaside{0}{6}{2}

}
\end{tikzpicture}
\end{center}
and factor the diagrams $\sigma$ and $\tau$ into pairs $\alpha = (\sigma_1,\id_n)$ and $\beta = (\tau_1,\id_q)$ in a way that $\omega * \alpha = \sigma$ and $\omega * \beta = \tau$, where $*$ denotes the horizontal composition. Faithfulness of the canonical functor then comes easily from the fact that the morphisms in the coend have the above mentioned `normal form'.
\end{enumerate}
\end{example}

\subsection*{Siftedness for enrichment in preorders}
The enrichment in the category $\Pre$ of preorders
and monotone maps is in many aspects similar to the enrichment
in $\Cat$, but the computations are much simpler. In fact, we
will be able to give a full characterisation of sifted
{\em conical\/} weights $\const_1:\E^\op\to\Pre$, see
Example~\ref{ex:sifted-conical-Pre}.

The crucial coend
$$
\int^e \phi e\times [\E^\op,\Pre](\psi,Ye)
$$
is computed as a coequaliser in $\Pre$ of two monotone
maps $L$ and $R$ that are defined in the same way as for
$\V=\Cat$, see~\eqref{eq:L+R}. 
Moreover, the coequaliser of $L$ and $R$ 
can be computed in two steps.
First we compute the coequaliser on the level of 
underlying sets. This yields a set of equivalence
classes of the form $[(\wh{x},\tau)]_\sim$ w.r.t.
the equivalence $\sim$ generated by $L$ and $R$.
The set of equivalence classes is then equipped
with a least preorder $\less$ satisfying the 
following condition:
\begin{enumerate}
\item[] 
If $(\wh{x},\tau)\leq (\wh{y},\sigma)$,
then $[(\wh{x},\tau)]_\sim\less [(\wh{y},\sigma)]_\sim$.
\end{enumerate}
where $\leq$ denotes the preorder of the coproduct
$\coprod_e \phi e\times [\E^\op,\Pre](\psi,Ye)$.

Below, we will also use hammocks for the enrichment in 
$\Pre$. These are pictures like~\eqref{eq:hammock} but the 2-cells
$u_i$, $v_i$ are replaced by mere inequality signs.

We show now that for {\em conical\/} weights 
$\phi:\E^\op\to\Pre$ 
the 2-dimensional aspect of siftedness is vacuous.
That this is not true for {\em general\/} weights
$\phi:\E^\op\to\Pre$ is demonstrated by the weight of
Example~\ref{ex:sifted-but-not-cat-sifted}: all
categories there are in fact enriched in $\Pre$.

\begin{example}[\bf Sifted conical weights]
\label{ex:sifted-conical-Pre}
The reasoning is similar to Example~\ref{ex:sifted+filtered}
above. Elements of $\Psi_1(\E)$ are finite coproducts 
$\coprod_{i\in I}Ye_i$ 
of representables in $[\E^\op,\Pre]$. By Yoneda Lemma, every
$\tau:\psi\to Ye$ can be identified with a cocone
$t_i:e_i\to e$. Then the requirement that for any two natural
transformations $\tau:\psi\to Ye$ and $\sigma:\psi\to Ye$ 
the equivalence $\tau \sim \sigma$ has to hold, corresponds 
to the fact that the cocones  
$t_i:e_i\to e$ and $s_i:e_i\to e$ 
(corresponding to $\tau$ and $\sigma$ respectively) 
have to be connected by a zig-zag. The 2-dimensional
aspect of siftedness is vacuous in this case.

Thus a weight $\const_\One:\E^\op\to\Pre$ is sifted 
if and only if the \emph{ordinary} functor
$$
\xymatrixcolsep{2.5pc}
\xymatrix{
\E^\op_o
\ar[0,1]^-{(\const_\One)_o}
&
\Pre_o
\ar[0,1]^-{{\mathsf{ob}}}
&
\Set
}
$$
is sifted in the \emph{ordinary} sense.
\end{example}

\begin{example}[\bf Sifted weights in general]
Consider a general weight $\phi:\E^\op\to\Pre$. 
To establish the isomorphism 
$$
\can:
\int^e
\phi e \times \prod_{i\in I} \E(e_i,e)
\to
\prod_{i\in I} \phi e_i,
$$
of preorders we need the monotone map $\can$ 
to be bijective and order-reflecting. 
As we noticed earlier, the coend is computed as a coequaliser 
in $\Set$ equipped with a freely generated preorder. 
More precisely, there are two conditions for a weight to be sifted:
\begin{enumerate}
\item 
To obtain bijectivity of the $\can$ mapping we demand that
$$
\xymatrix{
\E^\op_o
\ar[0,1]^-{\phi_o}
&
\Pre_o
\ar[0,1]^-{{\mathsf{ob}}}
&
\Set
}
$$
be an ordinary sifted weight.
\item
Order-reflectivity of $\can$ means that given any two tuples 
$(x_i) \leq (x'_i)$ from $\prod_{i\in I} \phi e_i$ 
we can form a hammock
$$
\xymatrix{
\phi
\ar@{=} [0,1]
&
\phi
\ar@{=} [0,1]
&
\phi
\ar@{}[0,1]|-{\cdots}
&
\phi
\ar@{=}[0,1]
&
\phi
\ar@{=}[0,1]
&
\phi
\\
e
\ar[-1,0]^{\wh{x}}
\ar@{~} [0,1]
&
e_1
\ar[-1,0]^{\wh{x}_1}
\ar@{=} [0,1]
\ar@{} [-1,1]|{\leq}
&
e_1
\ar[-1,0]_{\wh{x}'_1}
\ar@{}[0,1]|-{\cdots}
&
e_{n-1}
\ar[-1,0]^{\wh{x}_n}
\ar@{=} [0,1]
\ar@{} [-1,1]|{\leq}
&
e_{n-1}
\ar[-1,0]_{\wh{x}'_n}
\ar@{~} [0,1]
&
e'
\ar[-1,0]_{\wh{x}'}
\\
(e_i)
\ar[-1,0]^{\tau}
\ar@{=}[0,1]
&
(e_i)
\ar[-1,0]^{\tau_1}
\ar@{=} [0,1]
\ar@{} [-1,1]|{\leq}
&
(e_i)
\ar[-1,0]_{\tau'_1}
\ar@{}[0,1]|-{\cdots}
&
(e_i)
\ar[-1,0]^{\tau_n}
\ar@{=} [0,1]
\ar@{} [-1,1]|{\leq}
&
(e_i)
\ar@{=} [0,1]
\ar[-1,0]_{\tau'_n}
&
(e_i)
\ar[-1,0]_{\tau'}
}
$$
such that its left-hand vertical side 
evaluates to $(x_i)$, and its right-hand vertical side 
evaluates to $(x'_i)$.
\end{enumerate}
\end{example}

\begin{remark}
\label{rem:pre-cat-sifted}
Observe that the characterisations of sifted weights for
enrichments in $\Cat$ and $\Pre$ are strongly related. 
This is because the computations of coequalisers are
essentially the same.

In fact, the requirements for a weight 
$\phi: \E^\op \to \Pre$ to be sifted (as enriched in $\Pre$) 
are exactly the requirements of siftedness for the weight 
$\phi^\prime: \E^\op \to \Cat$, with $\phi^\prime$ 
being the weight $\phi$ considered as enriched 
in $\Cat$.

The situation is rather different when considering
sifted weights for the enrichment in the category $\Pos$
of all posets and monotone maps. The computation of
a coequaliser in $\Pos$ runs in two steps: one computes
the coequaliser in preorders {\em and then\/} performs
the poset-reflection. It is the second step that brings
in additional identifications and makes the characterisation
of siftedness quite complex. 
\end{remark}

\section{Conclusions and future work}
\label{sec:conclusions}

We gave a generalisation of the concept of a sound class of
small categories~\cite{ablr} to a sound class $\Psi$ of weights in the
context of enriched category theory. When passing to classes of weights, we showed that an easy analysis of flatness w.r.t.\ a sound class of weights is possible, providing us, for example, with known characterisations of siftedness and filteredness of ordinary categories. The same result yields elementary proofs of siftedness of various weights for the enrichment in categories or preorders.

A further characterisation of soundness is essentially
contained in~\cite{day+lack:small-functors}. Namely,
a class $\Psi$ of weights is sound iff the KZ-monad 
$\K\mapsto\P(\K)$ of free cocompletions under all colimits
lifts to the 2-category $\Psi\mbox{-}\Cont$
of all $\Psi$-complete categories, all $\Psi$-continuous
functors and all natural transformations.
We thank John Bourke for pointing this out to us.

The theory of lex colimits 
of~\cite{garner+lack:lex-colimits} works
precisely due to the lifting of $\K\mapsto\P(\K)$ to
$\Psi\mbox{-}\Cont$ for $\Psi$ being the class of weights
for finite limits. Hence a theory of `colimits
in the $\Psi$-world' can be developed for any sound 
class $\Psi$. Since lex colimits are used to understand
exactness of enriched categories, one can expect
a theory of `exactness in the $\Psi$-world' for
any sound class $\Psi$. 
This is the matter of future research.

\end{document}